\newtheorem{theorem}{Theorem}[section]
\newtheorem{remark}{Remark}[section]
\numberwithin{equation}{section}
\newcommand{\ds}{\displaystyle}
\numberwithin{equation}{section}
\numberwithin{figure}{section}
\newcommand{\W}{H_0^{1}(\Omega)}
\newcommand{\R}{{\mathbb R}}
\title{Energy properties of critical Kirchhoff problems with applications}
\author{Francesca Faraci }
\address{Department of Mathematics and Computer Science, University of Catania,
	Catania, Italy}\email{ffaraci@dmi.unict.it}
\author{Csaba Farkas}
\address{Department of Mathematics and Computer Science, Sapientia University,
	Tg. Mures, Romania \& Institute of Applied Mathematics, \'Obuda University,
	1034 Budapest, Hungary}
\email{farkas.csaba2008@gmail.com \& farkascs@ms.sapientia.ro}
\author{Alexandru Krist\'aly}
\address{\noindent Department of Economics, Babe\c s-Bolyai University, Cluj-Napoca, Romania\ \&  Institute of Applied Mathematics,
	\'Obuda University, 1034 Budapest, Hungary}
\email{alex.kristaly@econ.ubbcluj.ro; kristaly.alexandru@nik.uni-obuda.hu}
\thanks{The research of C. Farkas and A. Krist\'aly is supported by the National Research, Development and Innovation Fund of Hungary, financed under the K$\_$18 funding scheme, Project No.  127926.}
\subjclass[2000]{Primary: 35A15; Secondary: 35B38.}
\begin{document}

\maketitle
\begin{abstract}
	In this paper we fully characterize the sequentially weakly lower semicontinuity of the parameter-depending energy functional associated with the critical Kirchhoff problem. We also establish sufficient criteria with respect to the parameters for the convexity and validity of the Palais-Smale condition of the same energy functional. We then apply these regularity properties in the study of some elliptic problems involving the critical Kirchhoff term.
\end{abstract}
\section{Introduction}
The time-depending state of a stretched string is given by the solution of the nonlocal equation
\begin{equation}
\label{Kirchhoff}
u_{tt}- \left( a+b\ds\int_\Omega |\nabla_x u|^2 dx\right)\Delta_x u=h(t,x,u), \ \ (t,x)\in (0,+\infty)\times\Omega,
\end{equation}
proposed first by Kirchhoff \cite{Kirchhoff} in 1883. 
In \eqref{Kirchhoff}, $\Omega\subset \mathbb R^d$ is an open bounded domain, the solution $u:(0,\infty)\times\Omega \to \mathbb{R}$ denotes the displacement of the string,  $h:(0,\infty)\times\Omega\times \mathbb R\to \mathbb R$ is a Carath\'eodory function  representing the
external force,  $a$ is the initial tension, while $b$ is related to the intrinsic properties of the string (such as Young's modulus of the material).
Other nonlocal equations similar to \eqref{Kirchhoff} appear also in  biological systems, where $u$ describes a process depending on its average (over a given set), like population density, see e.g. Chipot and Lovat \cite{CL}.

Let $\Omega\subset\mathbb{R}^{d}$ be an open bounded domain, $d\geq4.$ The primary aim of the present paper is to establish basic properties of the energy functional associated with the stationary form of \eqref{Kirchhoff}, involving a critical term and subject to the Dirichlet boundary  condition, namely, 
\[
\ \left\{ \begin{array}{lll}
{\displaystyle -\left(a+b\int_{\Omega}|\nabla u|^{2}\right)\Delta u=|u|^{2^{*}-2}u} & \mbox{in} & \Omega,\\
u=0 & \mbox{on} & \partial\Omega,
\end{array}\right.\eqno{(\mathscr{P}_{a,b})}
\]
where $a,b>0$ and $\ds 2^*=\frac{2d}{d-2}$ is the critical Sobolev exponent. 
In spite of the competing effect of the nonlocal term $\ds \int_{\Omega}|\nabla u|^2dx\Delta u$ with the critical nonlinearity $|u|^{2^*-2}u$ as well as the lack of compactness of the Sobolev embedding $H^{1}_0(\Omega)\hookrightarrow L^{2^\star}(\Omega)$,  there are several contributions concerning existence and multiplicity of solutions for problem $(\mathscr{P}_{a,b})$, by  using fine arguments both from variational analysis (see e.g. Autuori, Fiscella and Pucci \cite{AFP}, Chen, Kuo and Wu \cite{CKW}, Corr\^{e}a and Figueiredo \cite{CF}, Figueiredo \cite{F}, Perera and Zhang\cite{PZ,ZP}) and topological methods (see e.g. Fan \cite{Fan},  Figueiredo and Santos \cite{FS}).
It is also worth mentioning that the Palais-Smale compactness condition combined with  the Lions concentration compactness principle \cite{L} are still the most popular tools to deal with elliptic problems involving critical terms. We note that problem $(\mathscr{P}_{a,b})$ is sensitive with respect to the size of the space dimension $d$. Indeed, different arguments/results are applied/obtained for the lower dimensional case $d\in \{3,4\} $ (see e.g. Alves, Corr\^{e}a and Figueiredo \cite{ACF}, Deng and Shuai \cite{DS}, Lei,  Liu and Guo \cite{LLG} and Naimen \cite{N0}) and  for the higher dimensional case $d>4$ (see Alves,  Corr\^{e}a and Ma \cite{ACM}, Hebey \cite{H1,H2}, Yao and Mu \cite{YM}); moreover,  the parameters $a$ and $b$ should satisfy suitable constraints in order to employ the aforementioned principles.

In order to obtain qualitative results  in the theory of Kirchhoff problems via direct methods of the calculus of variations (see e.g. Dacorogna \cite{Dacorogna}), basic regularity properties of the energy functional $\mathcal{E}_{a,b}$  associated with  problem $(\mathscr{P}_{a,b})$ are needed. Accordingly, in terms of $a$ and $b$, we fully characterize the sequentially weakly lower semicontinuity of $\mathcal{E}_{a,b}$, see Theorem \ref{main1}/(i). In addition,  sufficient conditions are also provided for both the validity of the Palais-Smale compactness condition and convexity  of $\mathcal{E}_{a,b}$, see Theorem\ref{main1}/(ii) and (iii), respectively.

 In the sequel, we state  our main results. Let $H^1_0(\Omega)$ and $L^q(\Omega)$ ($1\leq q\leq 2^*$) be the usual Sobolev  and Lebesgue spaces  endowed with the norms $$\|u\|=\left(\int_\Omega |\nabla u|^2\right)^{\frac{1}{2}}\ \ \mbox{ and }\ \ \|u\|_q=\left(\int_\Omega | u|^q\right)^{\frac{1}{q}},$$ respectively. 
The  critical Sobolev inequality is given by  
\begin{equation}\label{defSobolev}\mathtt{S}_d=\inf_{u\in H_0^1(\Omega)\setminus\{0\}}\frac{\|u\|^2}{\|u\|_{2^*}^2},\end{equation} 
or
\begin{equation}\label{Sob}
\|u\|_{2^*}^{2}\leq \mathtt{S}_d^{-1}\|u\|^2,\ \ \forall u\in H_0^1(\Omega),
\end{equation}
where
\begin{equation}\label{2*}
\mathtt{S}_d=\frac{d(d-2)}{4}\omega_d^{\frac{2}{d}},
\end{equation}
 see Talenti \cite{Talenti}, $\omega_d$ being the volume of the unit ball in $\mathbb{R}^d$. Note that the constant $\mathtt{S}_d$ is sharp in (\ref{defSobolev}) but never achieved except when $\Omega=\mathbb{R}^d$, see e.g. Willem \cite{Willem}.
The energy functional $\mathcal{E}_{a,b}:H^1_0(\Omega)\to\mathbb{R}$ associated with  problem $(\mathscr{P}_{a,b})$ is defined by $$\mathcal{E}_{a,b}(u)=\frac{a}{2}\|u\|^2+\frac{b}{4}\|u\|^4-\frac{1}{2^*}\|u\|_{2^*}^{2^*}.$$

For a fixed $d\geq4,$ we introduce the constants
\[
\mathtt{L}_d=
\begin{cases}\ds
\frac{4(d-4)^\frac{d-4}{2}}{d^\frac{d-2}{2}\mathtt{S}_d^\frac{d}{2}}, & d>4\\ \\
\ds \frac{1}{\mathtt{S}_{4}^{2}}, & d=4,
\end{cases}
\qquad  \qquad
\mathtt{PS}_d=\begin{cases}
\ds\frac{2(d-4)^{\frac{d-4}{2}}}{(d-2)^{\frac{d-2}{2}}\mathtt{S}_{d}^{\frac{d}{2}}}, & d>4\\ \\
\ds \frac{1}{\mathtt{S}_{4}^{2}}, & d=4,
\end{cases}
\]
and
$$\mathtt{C}_d=
\begin{cases}\ds
\frac{2(d-4)^{\frac{d-4}{2}}(d+2)^{\frac{d-2}{2}}}{(d-2)^{d-2}\mathtt{S}_d^{\frac{d}{2}}}, & d>4\\ \\
\ds \frac{3}{\mathtt{S}_{4}^{2}}, & d=4,
\end{cases}$$
which will play crucial roles in the lower semicontinuity, validity of the PS-condition and convexity of $\mathcal{E}_{a,b}$, respectively. Note that for every $d\geq 4$, we have
\begin{equation}\label{sorrend}
\mathtt{L}_d\leq\mathtt{PS}_d\leq \mathtt{C}_d.
\end{equation}
Moreover, as a formal observation, we notice that 
$$\lim_{d\to 4}\mathtt{L}_d=\mathtt{L}_4;\ \lim_{d\to 4}\mathtt{PS}_d=\mathtt{PS}_4;\ \lim_{d\to 4}\mathtt{C}_d=\mathtt{C}_4.$$

Our main result reads as follows:
\begin{theorem}\label{main1} Let $\Omega\subset \mathbb{R}^d$ be an open bounded domain $(d\geq 4)$,  $a,b>0$ two fixed numbers, and $\mathcal{E}_{a,b}$ be the  energy functional  associated with problem $(\mathscr{P}_{a,b})$. Then the following statements hold: 
	\begin{itemize}
		\item[{\rm (i)}]  $\mathcal{E}_{a,b}$ is sequentially weakly lower semicontinuous on $H^1_0(\Omega)$ if and only if  $a^{\frac{d-4}{2}}b\geq \ds\mathtt{L}_d;$
			\item[{\rm (ii)}] $\mathcal E_{a,b}$  satisfies the Palais-Smale condition on $H^1_0(\Omega)$ whenever $\ds a^{\frac{d-4}{2}}b>\mathtt{PS}_{d};$
				\item[{\rm (iii)}] $\mathcal E_{a,b}$ is convex on $H^1_0(\Omega)$ whenever $\ds a^{\frac{d-4}{2}}b\geq \mathtt{C}_{d}.$ In addition, $\mathcal{E}_{a,b}$ is strictly convex on $H^1_0(\Omega)$ whenever $\ds a^{\frac{d-4}{2}}b>\mathtt{C}_{d}$. 
	\end{itemize}
\end{theorem}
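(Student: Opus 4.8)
The statement has three parts, and the natural strategy is to reduce everything to a one–dimensional analysis along rays $t\mapsto tu$ for $u\in H^1_0(\Omega)$ with $\|u\|=1$, together with the critical Sobolev inequality \eqref{Sob}. I will first describe the reduction and then indicate the key computations.

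The plan is to reduce all three items to a single elementary inequality in one real variable. Write $\mathcal E_{a,b}=\Phi-\Psi$ with $\Phi(u)=\frac a2\|u\|^2+\frac b4\|u\|^4$ and $\Psi(u)=\frac1{2^*}\|u\|_{2^*}^{2^*}$, and record the identities $2^*-1=\frac{d+2}{d-2}$, $2^*-2=\frac4{d-2}$, $\frac{2^*}2=\frac d{d-2}$. The workhorse is a one–line computation: for $p,q,K>0$ and $\alpha=\frac2{d-2}\in(0,1]$ (with $\alpha=1$ exactly when $d=4$), the function $t\mapsto p+qt-Kt^\alpha$ is nonnegative on $[0,\infty)$ if and only if $p^{1-\alpha}q^\alpha\ge(1-\alpha)^{1-\alpha}\alpha^\alpha K$ — which in the degenerate case $d=4$ reads simply $q\ge K$ — and it is strictly positive on $[0,\infty)$ under the corresponding strict inequality (for $d=4$ already under $q\ge K$). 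I would then check that each of the three displayed thresholds is exactly the critical value of $a^{\frac{d-4}2}b$ at which this criterion becomes an equality, for the appropriate $(p,q,K)$; converting the criterion into the stated closed forms of $\mathtt L_d,\mathtt{PS}_d,\mathtt C_d$ is routine (raise to the power $\frac{d-2}2$ and substitute \eqref{2*}), and once this is done \eqref{sorrend} is immediate.

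For (i), sufficiency: let $u_n\rightharpoonup u$ in $H^1_0(\Omega)$. Passing to a subsequence realizing $\liminf_n\mathcal E_{a,b}(u_n)$ I may assume $\|u_n-u\|^2\to t\ge0$ and, by the Brezis--Lieb lemma, $\|u_n\|_{2^*}^{2^*}\to\|u\|_{2^*}^{2^*}+\mu$ with $0\le\mu\le\mathtt S_d^{-d/(d-2)}t^{d/(d-2)}$ by \eqref{Sob} applied to $u_n-u$; also $\|u_n\|^2\to\|u\|^2+t$. Discarding the nonnegative term $\frac b2\|u\|^2t$ then gives
\[ \liminf_n\mathcal E_{a,b}(u_n)-\mathcal E_{a,b}(u)\ \ge\ t\Big(\tfrac a2+\tfrac b4t-\tfrac1{2^*}\mathtt S_d^{-d/(d-2)}t^{2/(d-2)}\Big), \]
and by the workhorse lemma with $(p,q,K)=\big(\frac a2,\frac b4,\frac1{2^*}\mathtt S_d^{-d/(d-2)}\big)$ the bracket is $\ge0$ for all $t\ge0$ precisely when $a^{\frac{d-4}2}b\ge\mathtt L_d$, which yields sequential weak lower semicontinuity. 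For the converse, if $a^{\frac{d-4}2}b<\mathtt L_d$ the bracket is negative at some $t_0>0$; I would take normalized Talenti bubbles $w_n$ (available since $\mathtt S_d$ is not attained on $\Omega$), so that $\|w_n\|=1$, $w_n\rightharpoonup0$ and $\|w_n\|_{2^*}^{2^*}\to\mathtt S_d^{-d/(d-2)}$, and set $u_n=\sqrt{t_0}\,w_n\rightharpoonup0$: a direct computation gives $\mathcal E_{a,b}(u_n)\to t_0\cdot(\text{bracket at }t_0)<0=\mathcal E_{a,b}(0)$, so $\mathcal E_{a,b}$ fails to be sequentially weakly l.s.c.

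Items (ii) and (iii) use the same reduction. For (iii), since $\mathcal E_{a,b}\in C^2(H^1_0(\Omega))$ it suffices to check $\mathcal E_{a,b}''(u)[v,v]\ge0$; writing $\mathcal E_{a,b}''(u)[v,v]=a\|v\|^2+b\|u\|^2\|v\|^2+2b\langle u,v\rangle^2-(2^*-1)\int_\Omega|u|^{2^*-2}v^2$, dropping $2b\langle u,v\rangle^2\ge0$, and estimating $\int_\Omega|u|^{2^*-2}v^2\le\|u\|_{2^*}^{2^*-2}\|v\|_{2^*}^2\le\mathtt S_d^{-d/(d-2)}\|u\|^{2^*-2}\|v\|^2$ by Hölder with exponents $\frac d2,\frac d{d-2}$ and \eqref{Sob}, I reduce to $a+b\|u\|^2\ge(2^*-1)\mathtt S_d^{-d/(d-2)}\|u\|^{2^*-2}$ for all $u$, i.e.\ the workhorse lemma with $(p,q,K)=\big(a,b,(2^*-1)\mathtt S_d^{-d/(d-2)}\big)$, whose threshold is $\mathtt C_d$; the strict inequality makes the bracket strictly positive, hence $\mathcal E_{a,b}''(u)[v,v]>0$ for $v\ne0$ and $\mathcal E_{a,b}$ is strictly convex. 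For (ii), I would first note that $\mathcal E_{a,b}$ is coercive under the hypothesis (automatic for $d>4$, since then $2^*<4$; for $d=4$ because $b>\mathtt{PS}_4=\mathtt L_4=1/\mathtt S_4^2$), so every Palais--Smale sequence is bounded; then for $u_n\rightharpoonup u$, testing $\mathcal E_{a,b}'(u_n)\to0$ against $u_n-u$ and again invoking Brezis--Lieb and \eqref{Sob} gives $(a+b\lim\|u_n\|^2)\,t=\lim\|u_n-u\|_{2^*}^{2^*}\le\mathtt S_d^{-d/(d-2)}t^{d/(d-2)}$ with $t=\lim\|u_n-u\|^2$. Since $\lim\|u_n\|^2\ge t$, if $t>0$ this forces $a+bt\le\mathtt S_d^{-d/(d-2)}t^{2/(d-2)}$, contradicting the workhorse lemma with $(p,q,K)=\big(a,b,\mathtt S_d^{-d/(d-2)}\big)$ under the strict hypothesis $a^{\frac{d-4}2}b>\mathtt{PS}_d$; hence $t=0$ and $u_n\to u$ strongly, i.e.\ (PS) holds.

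The hard part is really contained in (i): sufficiency forces the interplay of Brezis--Lieb with the \emph{sharp} Sobolev inequality, so that the mass $t$ escaping at the critical exponent is penalized by exactly $\frac1{2^*}\mathtt S_d^{-d/(d-2)}t^{d/(d-2)}$ and must be offset by the superquadratic Kirchhoff term $\frac a2t+\frac b4t^2$; the necessity conversely needs the precise leading asymptotics of the Talenti bubbles to show this penalty is asymptotically attained. The remaining obstacle is purely computational but error-prone: verifying that the optimality criterion of the workhorse lemma, after the substitution \eqref{2*} and raising to the power $\frac{d-2}2$, reproduces $\mathtt L_d$, $\mathtt{PS}_d$, $\mathtt C_d$ on the nose, and handling the degenerate exponent $\alpha=1$ in dimension $d=4$ separately (where the criterion collapses to the linear comparison $q\ge K$, consistently with $\mathtt L_4=\mathtt{PS}_4=1/\mathtt S_4^2$ and $\mathtt C_4=3/\mathtt S_4^2$).
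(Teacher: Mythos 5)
Your proposal is correct. I verified that your one--variable criterion $p^{1-\alpha}q^{\alpha}\ge(1-\alpha)^{1-\alpha}\alpha^{\alpha}K$ with $\alpha=\frac{2}{d-2}$, raised to the power $\frac{d-2}{2}$, reproduces $\mathtt{L}_d$, $\mathtt{PS}_d$ and $\mathtt{C}_d$ exactly for your three choices of $(p,q,K)$, with the degenerate case $d=4$ collapsing to $q\ge K$ as you say. Parts (i)--sufficiency and (iii) follow the paper's argument essentially verbatim (the paper works with the functions $f_d,\overline f_d$ of $x=\|u_n-u\|$ and computes their minima case by case instead of invoking a single lemma in $t=x^2$). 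The genuine differences are in (i)--necessity and in (ii). For the sharpness of $\mathtt{L}_d$, the paper takes a minimizing sequence for $\mathtt{S}_d$ with a \emph{nonzero} weak limit $u$, rescales so that $c\|u\|$ lies beyond the zero of $f_d$, and extracts the strict inequality $\liminf_n\mathcal E_{a,b}(cu_n)<\mathcal E_{a,b}(cu)$ from the non-attainment of $\mathtt{S}_d$ (Willem, Prop.\ 1.43); you instead use a normalized concentrating sequence $w_n\rightharpoonup 0$, scale it to hit the negative value of the bracket, and compare with $\mathcal E_{a,b}(0)=0$, invoking non-attainment only to guarantee $w_n\rightharpoonup 0$. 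Your version is arguably cleaner, since producing a Sobolev-minimizing sequence with nonzero weak limit on a bounded domain is the delicate point of the paper's Step 2, whereas $w_n\rightharpoonup0$ is what concentration actually gives. For (ii), the paper applies Lions' second concentration-compactness lemma and kills each atom $(\eta_{j_0},\nu_{j_0})$ with cut-off functions, arriving at the scalar inequality $a+b\eta_{j_0}\le\mathtt{S}_d^{-2^*/2}\eta_{j_0}^{2^*/2-1}$; you test $\mathcal E_{a,b}'(u_n)\to 0$ against $u_n-u$ and use Br\'ezis--Lieb to reach the identical inequality with $\eta_{j_0}$ replaced by $t=\lim_n\|u_n-u\|^2$. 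Your route is shorter and avoids the measure-theoretic machinery (the paper's localizes the loss of compactness, which is useful in settings where only local information is available); the one step of yours that deserves an explicit line of justification is $\int_\Omega|u_n|^{2^*-2}u_n(u_n-u)=\|u_n-u\|_{2^*}^{2^*}+o(1)$, which needs, besides Br\'ezis--Lieb, the standard weak convergence $|u_n|^{2^*-2}u_n\rightharpoonup|u|^{2^*-2}u$ in $L^{(2^*)'}(\Omega)$ coming from boundedness plus a.e.\ convergence.
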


\begin{remark}\rm 
	(i) By the proof of Theorem \ref{main1}/(i) we observe that the sequentially weakly lower semicontinuouity of $\mathcal{E}_{a,b}$ holds on \textit{any} open domain $\Omega\subseteq \mathbb{R}^d$ (not necessary bounded). However, the optimality of the constant $\mathtt{L}_d$ requires that $\Omega\neq \mathbb{R}^d$, see Section \ref{proofs}.
	
	(ii) Note that a similar result as Theorem \ref{main1}/(ii)  (with the same assumption $\ds a^{\frac{d-4}{2}}b>\mathtt{PS}_{d}$) has been proved by Hebey \cite{H2} on compact Riemannian manifolds. We provide here a genuinely different proof than in \cite{H2} based on the second concentration compactness lemma of Lions \cite{L}. 
	
\end{remark}

\begin{figure}
	\centering
	\includegraphics[scale=0.55]{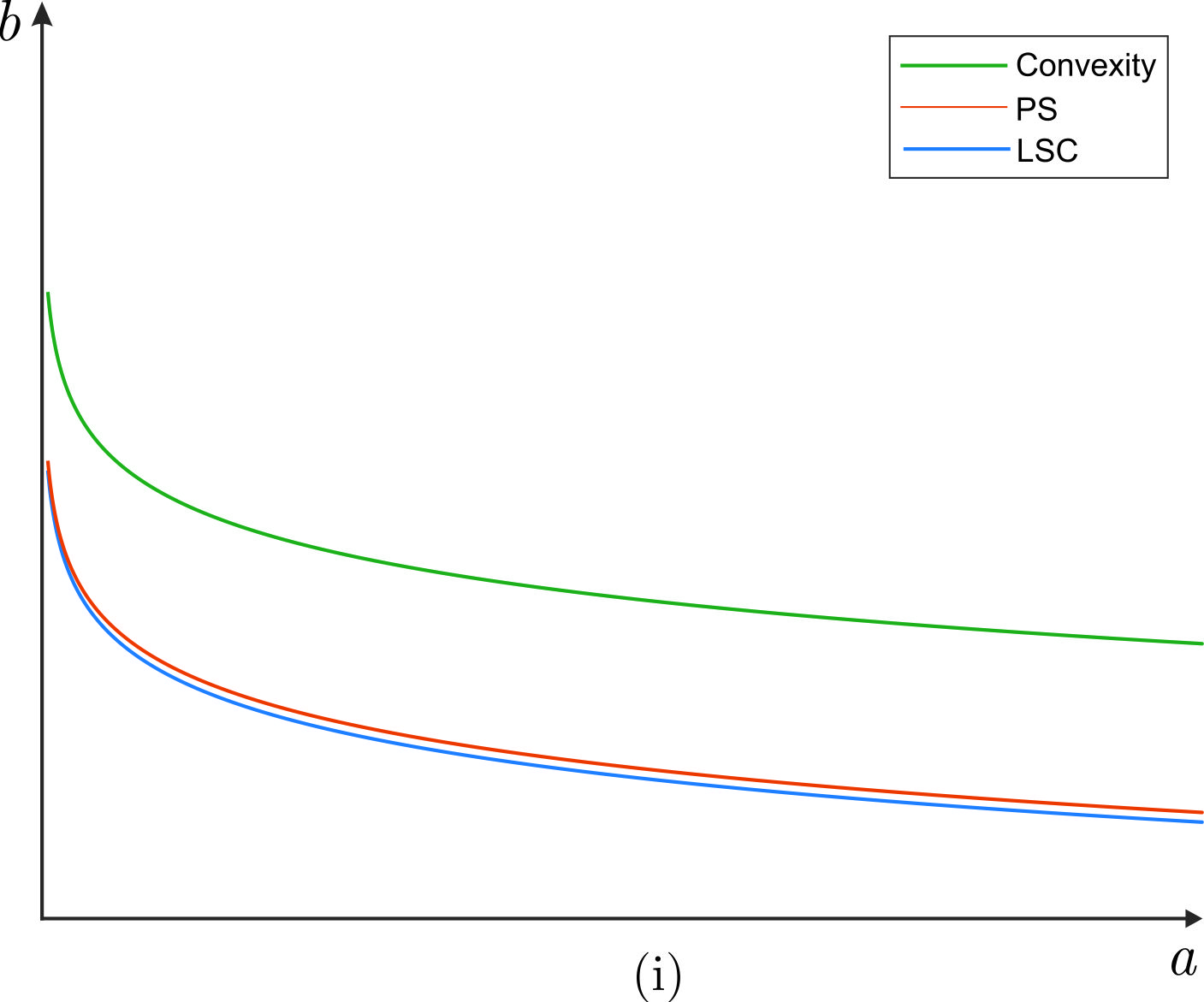} \ \
	\includegraphics[scale=0.55]{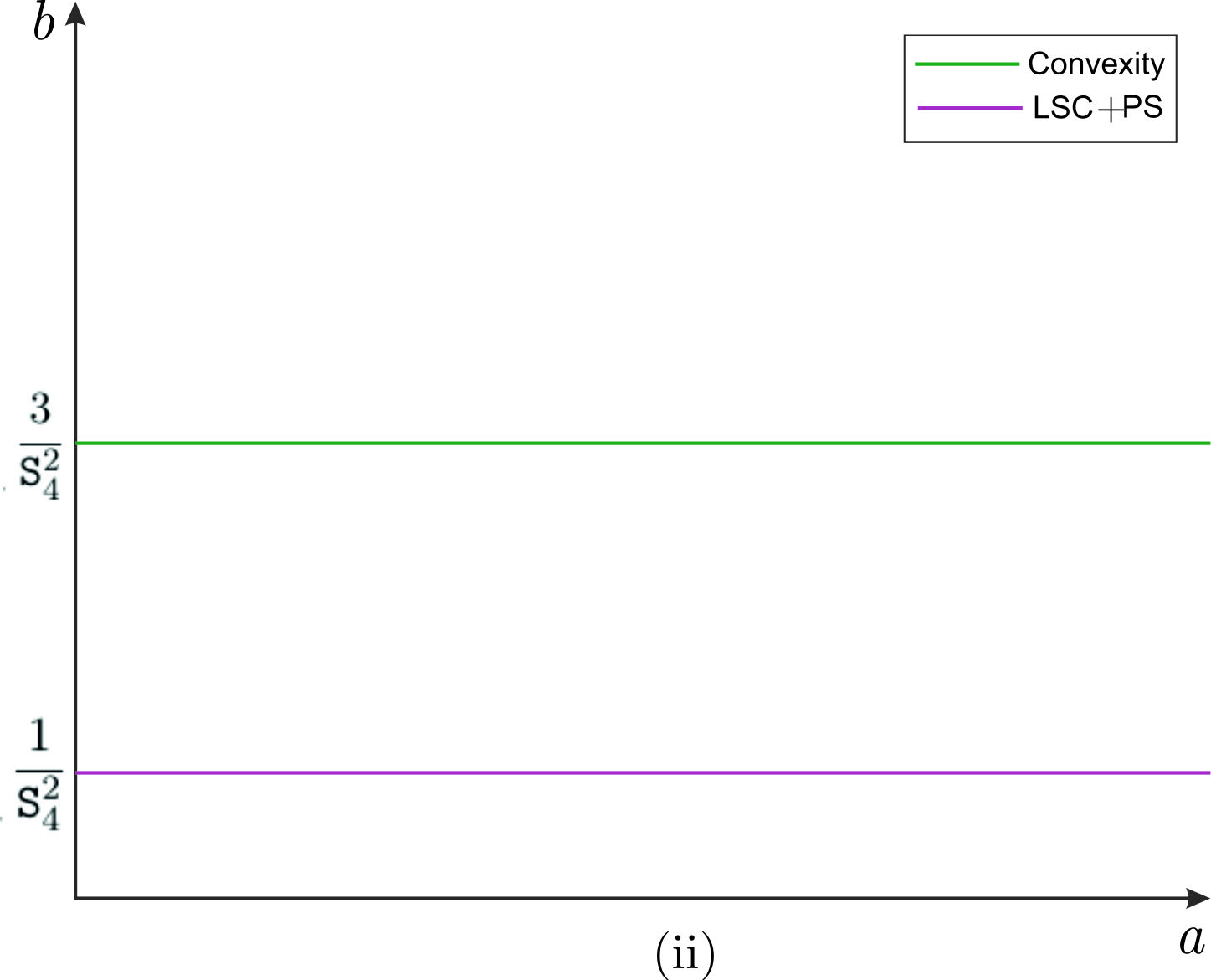}

	\caption{Curves $a^{\frac{d-4}{2}}b=\mathtt{L}_{d} \mbox{ and }\mathtt{PS}_d\mbox{ and }\mathtt{C}_d$ for $d>4$ (case (i)) and  $d=4$ (case (ii)).}
\end{figure}

In the sequel, we provide two applications of Theorem \ref{main1}. First, we  consider the model Poisson type problem
\[
\ \left\{ \begin{array}{lll}
{\displaystyle -\left(a+b\int_{\Omega}|\nabla u|^{2}\right)\Delta u=|u|^{2^{*}-2}u+h(x)} & \mbox{in} & \Omega,\\
u=0 & \mbox{on} & \partial\Omega,
\end{array}\right.\eqno{(\mathscr{P}_{a,b}^h)}
\]
where $h\in L^\infty(\Omega)$ is a positive function.

%
%

\begin{theorem}\label{Poisson}
	Let $\Omega\subset \mathbb{R}^d$ be an open bounded domain $(d\geq 4)$,  $a,b>0$ be fixed numbers. 
	Then \begin{itemize}
		\item[(i)]  if $a^{\frac{d-4}{2}}b\geq \ds\mathtt{L}_d$, problem $(\mathscr{P}_{a,b}^h)$ has at least a  weak solution in $H^1_0(\Omega);$
		\item[(ii)] if $\ds a^{\frac{d-4}{2}}b> \mathtt{C}_d$, problem $(\mathscr{P}_{a,b}^h)$ has a unique weak solution in $H^1_0(\Omega)$.
	\end{itemize}
\end{theorem}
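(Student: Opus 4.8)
The plan is to deduce both statements from the variational structure of the problem together with the regularity properties of $\mathcal{E}_{a,b}$ furnished by Theorem \ref{main1}. The natural energy functional for $(\mathscr{P}_{a,b}^h)$ is
\[
\mathcal{E}_{a,b}^h(u)=\mathcal{E}_{a,b}(u)-\int_\Omega h u
=\frac{a}{2}\|u\|^2+\frac{b}{4}\|u\|^4-\frac{1}{2^*}\|u\|_{2^*}^{2^*}-\int_\Omega h u,
\]
whose critical points are exactly the weak solutions of $(\mathscr{P}_{a,b}^h)$. Since $h\in L^\infty(\Omega)\subset L^{(2^*)'}(\Omega)$ on a bounded domain, the linear functional $u\mapsto\int_\Omega h u$ is bounded on $H_0^1(\Omega)$ and sequentially weakly continuous, hence it is both convex (affine) and sequentially weakly lower semicontinuous, and of class $C^1$. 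Thus $\mathcal{E}_{a,b}^h$ inherits from $\mathcal{E}_{a,b}$ all three properties in Theorem \ref{main1} under the respective hypotheses on $a^{\frac{d-4}{2}}b$.

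For part (i), assume $a^{\frac{d-4}{2}}b\geq\mathtt{L}_d$. By Theorem \ref{main1}/(i), $\mathcal{E}_{a,b}$ is sequentially weakly lower semicontinuous, hence so is $\mathcal{E}_{a,b}^h$. The key remaining point is coercivity: I would show $\mathcal{E}_{a,b}^h(u)\to+\infty$ as $\|u\|\to\infty$. When $a^{\frac{d-4}{2}}b\geq\mathtt{L}_d$, an elementary one-variable analysis (the same computation that underlies the definition of $\mathtt{L}_d$) shows that $t\mapsto\frac{a}{2}t^2+\frac{b}{4}t^4-\frac{1}{2^*}\mathtt{S}_d^{-2^*/2}t^{2^*}$ is bounded below on $[0,\infty)$; combined with $\|u\|_{2^*}^{2^*}\le\mathtt{S}_d^{-2^*/2}\|u\|^{2^*}$ from \eqref{Sob}, this gives $\mathcal{E}_{a,b}(u)\ge -M$ for some constant $M$, while the quartic term $\frac{b}{4}\|u\|^4$ dominates the linear term $-\int_\Omega h u\ge -\|h\|_{(2^*)'}\mathtt{S}_d^{-1/2}\|u\|$ at infinity. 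Hence $\mathcal{E}_{a,b}^h$ is coercive and sequentially weakly lower semicontinuous on the reflexive space $H_0^1(\Omega)$, so it attains its infimum at some $u_0$ by the direct method; since $\mathcal{E}_{a,b}^h\in C^1$, $u_0$ is a critical point and hence a weak solution.

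For part (ii), assume $a^{\frac{d-4}{2}}b>\mathtt{C}_d$. By Theorem \ref{main1}/(iii), $\mathcal{E}_{a,b}$ is strictly convex, hence $\mathcal{E}_{a,b}^h$ is strictly convex as well. A strictly convex $C^1$ functional has at most one critical point, which gives uniqueness. Existence already follows from part (i), since $\mathtt{C}_d\ge\mathtt{L}_d$ by \eqref{sorrend}; alternatively, strict convexity plus the coercivity established above yields existence directly via the direct method. The only delicate issue in the whole argument is making the coercivity estimate precise, i.e.\ checking that the threshold $\mathtt{L}_d$ is exactly what is needed to keep the purely $\|u\|$-dependent part of $\mathcal{E}_{a,b}$ bounded below — but this is the same scalar inequality already analyzed in the proof of Theorem \ref{main1}/(i), so it can be quoted rather than redone.
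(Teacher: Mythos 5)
Your proposal is correct and follows essentially the same route as the paper: both minimize $\mathcal{E}_{a,b}(u)-\int_\Omega hu$ by combining Theorem \ref{main1}/(i) with the sequential weak continuity of the linear term and coercivity to get existence, and both invoke the strict convexity from Theorem \ref{main1}/(iii) to get uniqueness via (\ref{sorrend}). One small caution on your coercivity sketch: in the borderline case $d=4$, $b=\mathtt{S}_4^{-2}$ the quartic part of the scalar lower bound cancels exactly, so coercivity comes from the surviving quadratic term $\frac{a}{2}\|u\|^2$ rather than from ``the quartic term dominating the linear term''; the conclusion is unaffected.
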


\noindent As a second application we consider the double-perturbed problem of $(\mathscr{P}_{a,b})$ of the form
$$
\left\{
\begin{array}{ll}
{\displaystyle -\left(a+b\int_{\Omega}|\nabla u|^{2}\right)\Delta u=|u|^{2^{*}-2}u}+\lambda |u|^{{p-2}}u+\mu g(x,u), & \hbox{ in } \Omega \\

u=0, & \hbox{on } \partial \Omega
\end{array}
\right. \eqno{(\mathscr{P}_{a,b}^{p,g})}
$$
where  $a, b, \lambda,\mu$ are positive parameters, $1<p<2^*$ and  $g:\Omega\times\mathbb{R}\to\mathbb{R}$ is Carath\'{e}odory function belonging to the class $\mathcal A$ which contains 
functions $\varphi:\Omega\times \mathbb{R}\to\mathbb{R}$ such that
$$\sup_{(x,t)\in \Omega \times \mathbb{R}}\frac{|\varphi(x,t)|}{1+|t|^{q-1}}<+\infty$$ for some  $1<q<2^*$. 

The single-perturbed problem  $(\mathscr{P}_{a,b}^{p,0})$ (i.e., $g\equiv 0$) is of particular interest. Indeed, when $\lambda>0$ is small enough and $d=4$,  Naimen \cite{N0} proved that  $(\mathscr{P}_{a,b}^{p,0})$ has a positive solution if and only if $b<\mathtt S_4^{-2} $; when $d>4$, there are also some sufficient conditions for guaranteeing the existence of positive solutions for $(\mathscr{P}_{a,b}^{p,0})$. In addition, if $p\in (2,4)$ and  $b>\mathtt S_4^{-2}= \mathtt{L}_4=\mathtt{PS}_4$, one can easily prove that any weak solution $u\in H_0^1(\Omega)$ of $(\mathscr{P}_{a,b}^{p,0})$ fulfills the \textit{a priori} estimate 
\begin{equation}\label{apriori}
\|u\|\leq \left(\frac{\lambda \mathtt S_4^{2-\frac{p}{2}}|\Omega|^{1-\frac{p}{2^*}}}{b\mathtt S_4^2- 1}\right)^\frac{1}{4-p}.
\end{equation}


The following result is twofold. First, it complements the result of Naimen \cite{N0} (i.e., we consider $b> \mathtt S_4^{-2}$ for $d=4$); second, having in our mind the global estimate (\ref{apriori}), 
it shows that the weak solutions of the perturbed problem  $(\mathscr{P}_{a,b}^{p,0})$ by means of any  subcritical function will be stable with respect to the $H_0^1$-norm (whenever $\lambda>0$ is large enough).

\begin{theorem}\label{japan} Let $\Omega\subset \mathbb{R}^d$ be an open bounded domain $(d\geq 4)$,  $a,b>0$ two fixed  numbers such that  $a^{\frac{d-4}{2}}b>\mathtt{PS}_d$ and $p\in (2,2^*)$.  Then there exists $\lambda^*>0$ such that  for each compact
	interval $[\alpha, \beta]\subset (\lambda^*, +\infty )$, there exists $r>0$
	with the following property: for every $\lambda \in [\alpha, \beta],$  and for every $g\in\mathcal A$,  there
	exists $\mu^* > 0$ such that for each $\mu \in [0, \mu^*],$ 
	problem $(\mathcal{P}_{\lambda,\mu}^{p,g})$ has at least three weak
	solutions   whose
	norms are less than $r$.
\end{theorem}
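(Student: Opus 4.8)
The plan is to apply an abstract three-critical-points theorem (of Ricceri type, or a suitable variant thereof) to the energy functional associated with $(\mathscr{P}_{a,b}^{p,g})$. First I would write the energy functional as $\mathcal{J}_{\lambda,\mu}(u) = \mathcal{E}_{a,b}(u) - \frac{\lambda}{p}\|u\|_p^p - \mu \int_\Omega G(x,u)\,dx$, where $G$ is the primitive of $g$; the hypothesis $a^{\frac{d-4}{2}}b > \mathtt{PS}_d$ guarantees, via Theorem \ref{main1}/(ii), that $\mathcal{E}_{a,b}$ — and hence each $\mathcal{J}_{\lambda,\mu}$, since the perturbing terms are compact by the Rellich–Kondrachov theorem and $p,q<2^*$ — satisfies the Palais–Smale condition. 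Note also that since $\mathtt{PS}_d \geq \mathtt{L}_d$, Theorem \ref{main1}/(i) gives that $\mathcal{E}_{a,b}$ is sequentially weakly lower semicontinuous, and the compact lower-order terms preserve this, so each $\mathcal{J}_{\lambda,\mu}$ is sequentially weakly lsc; coercivity follows easily since $\mathcal{E}_{a,b}$ dominates a quartic term in $\|u\|$ while the perturbations grow at most like $\|u\|^{2^*-\varepsilon}$... actually more carefully, since the $\|u\|_{2^*}^{2^*}$ term in $\mathcal{E}_{a,b}$ is controlled by $\frac{b}{4}\|u\|^4$ under the semicontinuity threshold, and $p<2^*$, coercivity holds.

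The core step is to verify the geometric/min-max hypotheses of the abstract theorem. Following Ricceri's variational principle, one introduces $\Phi(u) = \mathcal{E}_{a,b}(u)$ (the ``unperturbed'' part, which should play the role of the functional whose sublevel sets are weakly compact) and $\Psi(u) = \frac{1}{p}\|u\|_p^p$, and studies the function $\lambda \mapsto \inf_u (\Phi(u) + \lambda(\rho - \Psi(u)))$ for appropriate $\rho$; the key is to show that for a suitable range of the ratio, the function $u \mapsto \Phi(u) - \lambda \Psi(u)$ has a strict local-but-not-global minimum, which produces, for $\mu$ small, a third critical point by a standard deformation/mountain-pass argument between the two minima. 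Concretely I would: (1) fix $\rho > 0$ and test with a suitable family of functions (e.g. rescaled bumps or the function realizing a near-optimal $L^p$ vs. $L^2$-gradient ratio) to show the relevant infimum over the sublevel set $\{\Phi \leq \rho\}$ of $\Psi$ is strictly larger than what one gets on a smaller sublevel set, establishing that the map has the required non-monotonicity; (2) invoke the abstract theorem to get an open interval of admissible $\lambda$'s — here the threshold $\lambda^*$ arises — on which $\Phi - \lambda\Psi$ has at least two local minima with uniformly bounded norms; (3) add the $\mu g$ perturbation: since $g \in \mathcal{A}$ has subcritical growth, $u \mapsto \int_\Omega G(x,u)$ is sequentially weakly continuous with locally bounded derivative, so for $\mu$ in a small interval $[0,\mu^*]$ the two-local-minima structure and the PS condition persist, and the mountain-pass theorem yields the third solution, all inside a ball of radius $r$ determined by the a priori norm bounds on the sublevel sets.

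The main obstacle I anticipate is step (1): verifying the sharp min-max inequality that makes the abstract three-critical-points machinery apply, i.e., exhibiting a single element $u_0$ with $\Phi(u_0)$ small and $\Psi(u_0)$ comparatively large while simultaneously controlling $\inf\{\Phi(u) : \Psi(u) \geq \Psi(u_0)\}$ from below — this is where the quartic Kirchhoff term and the critical term interact delicately, and one must use the semicontinuity threshold $a^{\frac{d-4}{2}}b>\mathtt{PS}_d$ (not merely $\geq \mathtt{L}_d$) to ensure enough coercivity/compactness margin so that the sublevel sets of $\Phi$ are bounded and the infima are attained. A secondary technical point is keeping all the constants uniform in $\lambda \in [\alpha,\beta]$ and in $g \in \mathcal{A}$ (the latter only constrained by subcriticality, not by a uniform bound), which forces the estimates in steps (2)–(3) to depend only on the exponent $q<2^*$ and the compact-embedding constants, not on the particular $g$; this is handled by noting that $\mathcal{A}$-membership gives growth control $|G(x,t)| \leq C(1+|t|^q)$ with $C$ depending on $g$, but the smallness is absorbed into the choice of $\mu^*$ after fixing $r$, so uniformity over the compact $\lambda$-interval is what actually matters and that follows from continuity of the relevant infima in $\lambda$.
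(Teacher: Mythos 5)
Your proposal follows essentially the same route as the paper: the paper proves a slightly more general statement (Theorem \ref{utolso-tetel}) by decomposing the functional as $\mathcal{E}_{a,b}-\lambda J_f-\mu J_g$, using Theorem \ref{main1}/(i)--(ii) for sequential weak lower semicontinuity, coercivity and the Palais--Smale condition, defining $\lambda^*$ as the infimum of the quotient $\mathcal{E}_{a,b}(u)\big/\int_\Omega F(x,u)$, producing two local minima (zero, upgraded from a strong to a weak-topology local minimizer via Ricceri's Theorem C, and the nonzero global minimizer for $\lambda>\lambda^*$) that persist under the $\mu$-perturbation by Ricceri's stability theorem, and then obtaining the third solution inside $B_r$ via the Pucci--Serrin mountain-pass theorem. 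The only substantive details you gloss over are this upgrade of the local minimum at $0$ to the weak topology (needed for the perturbation step) and the truncation $\tilde J_g=h\circ J_g$ that keeps the radius $r$ uniform in $g$ while letting $\mu^*$ depend on $g$ --- but these are implementation choices within the same strategy.
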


In fact, instead of Theorem \ref{japan} a slightly more general result will be given in  Section  \ref{section-appl}, replacing the term $u\mapsto|u|^{p-2}u$ by a function $f\in \mathcal A$ verifying some mild hypotheses. 

%
%
%

\section{Proof of  Theorem \ref{main1}}\label{proofs}

\begin{proof}[Proof of Theorem {\rm\ref{main1}/(i)}] We divide the proof into two parts. 
	
	\textit{Step 1.}  Assume first that $a^{\frac{d-4}{2}}b\geq \ds\mathtt{L}_d$; we are going to prove that the energy functional $\mathcal{E}_{a,b}$ is sequentially weakly lower semicontinuous on  $H^1_0(\Omega)$.
	To see this, let $u \in H^1_0(\Omega)$ be arbitrarily fixed and consider a sequence $\{u_n\} \subset H^1_0(\Omega)$ such that $u_n\rightharpoonup u$ in $H^1_0(\Omega)$.
Thus, up to a subsequence, we have for every $p<2^*$ that $u_n\to u \hbox{ in }L^p(\Omega)$ and $\nabla u_n\rightharpoonup\nabla u \hbox{ in } L^2(\Omega)$ as $n\to \infty.$

By the latter relation, it is clear that \begin{align*}\|u_n\|^2-\|u\|^2=\|u_n-u\|^2+2\int_{\Omega}\nabla(u_n-u)\nabla u= \|u_n-u\|^2+o(1),\ \ {\rm as}\ n\to \infty.
\end{align*}
We also have that 
\begin{align*}
\|u_n\|^4-\|u\|^4&=\left(\|u_n\|^2-\|u\|^2\right)\left(\|u_n\|^2+\|u\|^2\right)\\&=\left(\|u_n-u\|^2+o(1)\right)\left(\|u_n-u\|^2+2\int_{\Omega}\nabla (u_n-u)\nabla u+2\|u\|^2\right)\\
&=\left(\|u_n-u\|^2+o(1)\right)\left(\|u_n-u\|^2+2\|u\|^2+o(1)\right),\ \ {\rm as}\ n\to \infty.
\end{align*}
On the other hand, by the Br\'ezis-Lieb Lemma (see e.g. Willem \cite{Willem}), one has 
 $$\|u_n\|_{2^*}^{2^*}-\|u\|_{2^*}^{2^*}=\|u_n-u\|_{2^*}^{2^*}+o(1),\ \ {\rm as}\ n\to \infty.$$
Combining the above estimates, it yields 
\begin{align*}
\mathcal{E}_{a,b}(u_n)-\mathcal{E}_{a,b}(u)=&\frac{a}{2}(\|u_n\|^2-\|u\|^2)+\frac{b}{4}(\|u_n\|^4-\|u\|^4)-\frac{1}{2^*}\left(\|u_n\|_{2^*}^{2^*}-\|u\|_{2^*}^{2^*}\right)\\=&\frac{a}{2}\|u_n-u\|^2+\frac{b}{4}\left(\|u_n-u\|^4+2\|u\|^2\|u_n-u\|^2\right)-\frac{1}{2^*}\|u_n-u\|_{2^*}^{2^*}+o(1) \\\overset{\eqref{Sob}}{\geq}& \frac{a}{2}\|u_n-u\|^2+\frac{b}{4}\left(\|u_n-u\|^4+2\|u\|^2\|u_n-u\|^2\right)-\frac{\mathtt{S}_d^{-\frac{2^*}{2}}}{2^*}\|u_n-u\|^{2^*}+o(1)
\\ \geq& \frac{a}{2}\|u_n-u\|^2 +\frac{b}{4}\|u_n-u\|^4-\frac{\mathtt{S}_d^{-\frac{2^*}{2}}}{2^*}\|u_n-u\|^{2^*}+o(1)\\=& \|u_n-u\|^2 \left(\frac{a}{2}+\frac{b}{4}\|u_n-u\|^2-\frac{\mathtt{S}_d^{-\frac{2^*}{2}}}{2^*}\|u_n-u\|^{2^*-2}\right)+o(1),\ \ {\rm as}\ n\to \infty.
\end{align*}
Let us consider the  function $f_d:[0,\infty)\to \mathbb R$ defined by \begin{equation}\label{fontosfuggveny}
f_d(x)=\frac{a}{2}+\frac{b}{4}x^2-\frac{\mathtt{S}_d^{-\frac{2^*}{2}}}{2^*}x^{2^*-2}, \ \ x\geq 0.
\end{equation} 
We claim that the function $f_d$ is positive for all $x\geq 0$.

\textit{Case 1}: $d=4$.  If follows that $2^*=4$, thus by the hypothesis $b\geq \ds\mathtt{L}_d$ -- which is equivalent to $b\mathtt{S}_4^2\geq 1$, -- it directly follows that
\begin{align*} f_4(x)&=\frac{a}{2}+\frac{b-\mathtt{S}_4^{-2}}{4}x^2\geq 0,\ \forall x\geq 0.
\end{align*}

\textit{Case 2}: $d>4$. The minimum of the function $f_d$ is at $m_d>0$, where $$m_d=\left(\frac{2^*b}{2(2^*-2)}\mathtt{S}_d^{\frac{2^*}{2}}\right)^{\frac{1}{2^*-4}}.$$ A simple algebraic computation shows that 
\begin{equation}\label{ekvivalencia-lsw}
\displaystyle a^\frac{d-4}{2}b\geq \mathtt{L}_d \Longleftrightarrow f_d(m_d)=\frac{1}{2}\left(a-b^{-\frac{2}{d-4}}\mathtt{L}_d^{\frac{2}{d-4}}\right)\geq 0,
\end{equation}
 which proves the claim.

Summing up the above estimates, we have that \begin{equation}
\label{later}
\liminf_{n\to \infty}(\mathcal{E}_{a,b}(u_n)-\mathcal{E}_{a,b}(u))\geq \liminf_{n \to \infty}\|u_n-u\|^2 f_d(\|u_n-u\|)\geq 0,\end{equation} which proves the sequentially weakly lower semicontinuity of  $\mathcal{E}_{a,b}$ on $H_0^1(\Omega)$.

	\textit{Step 2.}
Now, we prove that the constant $\mathtt{L}_d$ in Theorem \ref{main1} is sharp.  Assume the contrary, i.e., $\mathcal{E}_{a,b}$ is still  sequentially weakly lower semicontinuous   on $H_0^1(\Omega)$ for some $a,b>0$ with the property that 
\begin{equation}\label{a-b-tagadas}
a^{\frac{d-4}{2}}b<\mathtt{L}_d.
\end{equation}

	
\textit{Case 1}: $d=4$. Fix  a minimizing sequence $\{u_n\}\subset H_0^1(\Omega)$ for $\mathtt{S}_4$ in \eqref{defSobolev}; by its boundedness it is clear that there exists $u\in H_0^1(\Omega)\setminus \{0\}$ such that, up to a subsequence, $u_n\rightharpoonup u$ in $H^1_0(\Omega)$. Moreover,  the sequentially weakly lower semicontinuity of the norm $\|\cdot \|$ implies that $\ds \|u\|\leq \liminf_{n\to \infty}\|u_{n}\|=:L$ and there exists a subsequence $\{u_{n_j}\}$ of $\{u_n\}$ such that  $\ds L= \lim_{j\to \infty}\|u_{n_j}\|;$  in particular, $L>0.$

By recalling the function $f_4$ from (\ref{fontosfuggveny}), due to (\ref{a-b-tagadas}), it is clear that on $(x_0,\infty)$ the function $f_4$ is decreasing and negative, where $\displaystyle x_0=\left(\frac{2a\mathtt{S}_4^2}{1-\mathtt{S}_4^2b}\right)^\frac{1}{2}$ is the unique solution of $f_4(x)=0$, $x\geq 0$.
\begin{figure}[H]
	\centering
	\label{abra1}
	\includegraphics[scale=0.35]{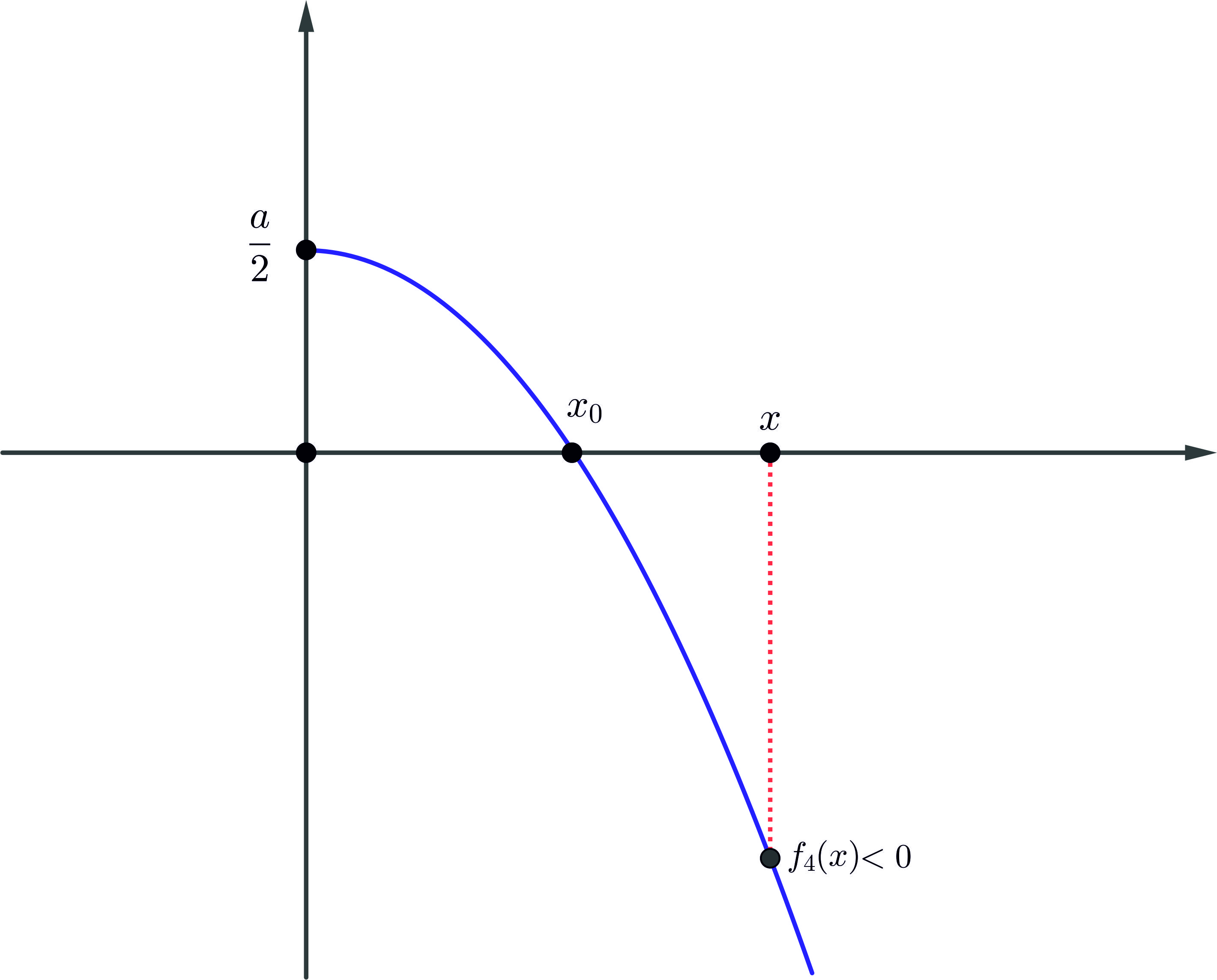}
	\caption{Shape of the function $\ds x\mapsto f_4(x)$, $x\geq 0,$ when (\ref{a-b-tagadas}) holds.}
\end{figure}
Let $c>0$ be such that $cL\geq c\|u\|>x_0$. It is also clear that $\{cu_{n_j}\}$ is a minimizing sequence for $\mathtt{S}_4$ and $cu_{n_j}\rightharpoonup cu$ in $H_0^1(\Omega)$ as $j\to \infty$. Consequently, since $f_4$ is continuous, we have that 
 \begin{eqnarray}\nonumber\liminf_{n\to \infty}\mathcal{E}_{a,b}(cu_n)&\leq &  \liminf_{j\to \infty}\mathcal{E}_{a,b}(cu_{n_j})\\ &=&\liminf_{j\to \infty}\left\{\frac{a}{2}\|cu_{n_j}\|^2+\frac{b}{4}\|cu_{n_j}\|^4-\frac{\mathtt{S}_4^{-2}}{4}\|cu_{n_j}\|^4\right\}\nonumber\\
&=&\liminf_{j\to \infty} \|cu_{n_j}\|^2 f_4\left(\|cu_{n_j}\|\right)\nonumber\\&=& (cL)^2 f_4(cL)\label{fontos}.
\end{eqnarray}
Since  $cL\geq c\|u\|>x_0$, we have that $f_4(cL)\leq f_4(\|cu\|)<0$, thus by \eqref{fontos}, we get that $$\liminf_{n\to \infty}\mathcal{E}_{a,b}(cu_n)\leq \|cu\|^2f_4(\|cu\|).$$ On the other hand, by (\ref{Sob}) we have  \begin{align}\label{meg-ez-is-kell}\|cu\|^2f_4(\|cu\|)&=\frac{a}{2}\|cu\|^2+\frac{b}{4}\|cu\|^4-\frac{\mathtt{S}_4^{-2}}{4}\|cu\|^4\nonumber \\ &\leq \nonumber \frac{a}{2}\|cu\|^2+\frac{b}{4}\|cu\|^4-\frac{1}{4}\int_{\Omega}|cu|^4\\ &= \mathcal{E}_{a,b}(cu).
\end{align}
By the above estimates we have that 
$\liminf_{n\to \infty}\mathcal{E}_{a,b}(cu_n)\leq \mathcal{E}_{a,b}(cu)$. 
In fact, we have strict inequality in the latter relation; indeed, otherwise we would have $\mathtt{S}_4^{-2}\|cu\|^4=\|cu\|_{4}^{4},$ i.e., $u$ would be an extremal function in \eqref{defSobolev}. However, since $\Omega\neq \mathbb{R}^d$, no extremal function exists in  \eqref{defSobolev}, see Willem  \cite[Proposition 1.43]{Willem}. Thus, we indeed have  $$\liminf_{n \to \infty}\mathcal{E}_{a,b}(cu_n)<\mathcal{E}_{a,b}(cu),$$ which contradicts the  sequentially weakly lower semicontinuouity of $\mathcal{E}_{a,b}$  on $H_0^1(\Omega)$. Accordingly,  it yields that (\ref{a-b-tagadas}) cannot hold whenever the sequentially weakly lower semicontinuouity of $\mathcal{E}_{a,b}$  on $H_0^1(\Omega)$ is assumed,  which proves the optimality of the constant $\mathtt{L}_d$ in the case when $d=4$.

\medskip

\textit{Case 2}: $d>4$. Since $0<2^*-2<2,$ it is clear that $f_d(+\infty)=+\infty$ and  the assumption (\ref{a-b-tagadas}) together with the equivalence (\ref{ekvivalencia-lsw}) ensures that the function $f_d$  has its global minimum point at $m_d>0$ with $f_d(m_d)<0$.

\begin{figure}[H]
	\centering
	\label{abra2}
	\includegraphics[scale=0.30]{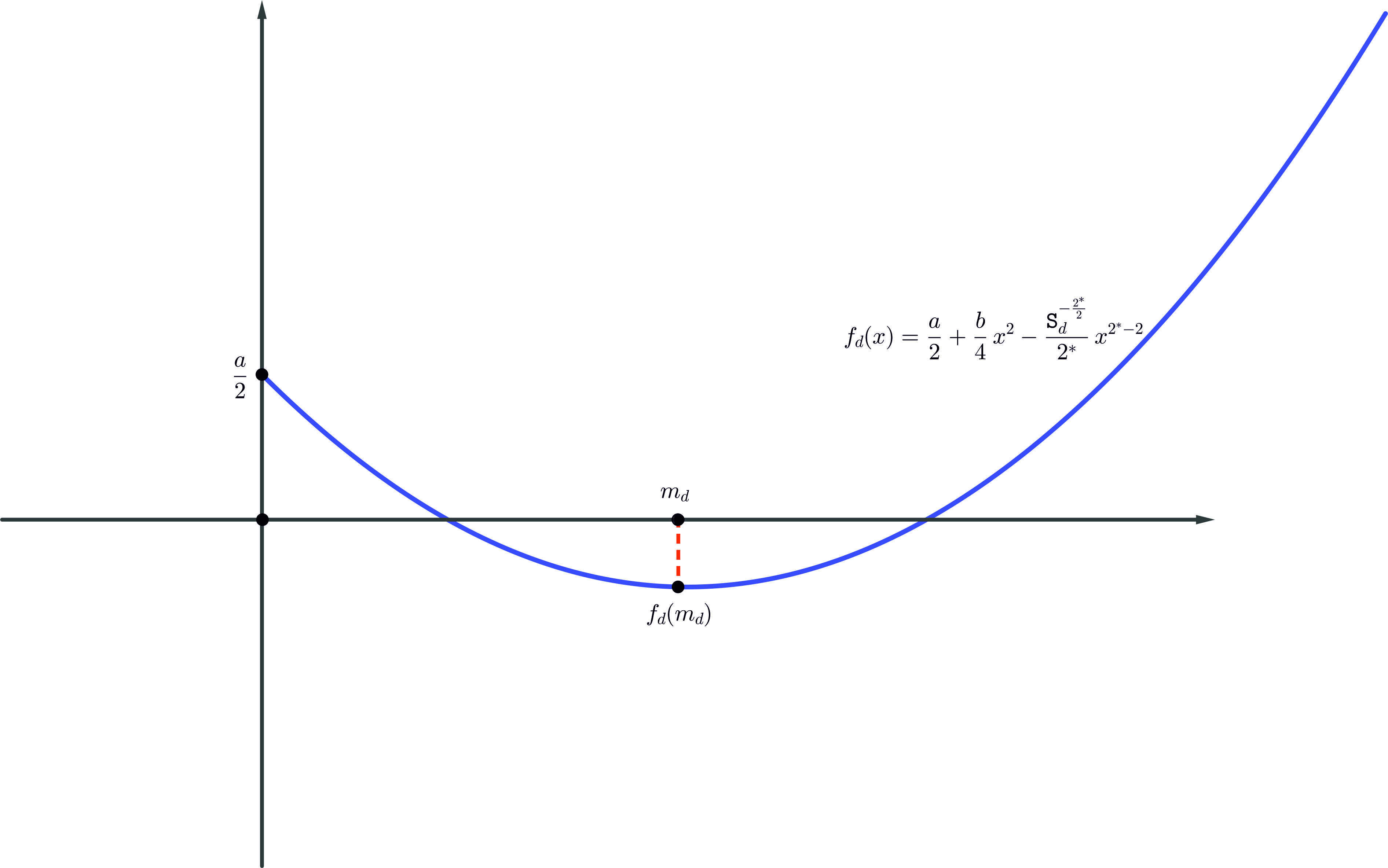}
	\caption{Shape of the function $\ds x\mapsto f_d(x)$, $x\geq 0,$ when $d>4$ and (\ref{a-b-tagadas}) holds.}
\end{figure}
Consider a minimizing sequence $\{u_n\}\subset H_0^1(\Omega)$ for $\mathtt{S}_d$, and let $\{u_{n_j}\}$ be a subsequence of $\{u_n\}$,  $L>0$ and $u\in H_0^1(\Omega)$ as in \textit{Case 1}.   Let $c=\frac{m_d}{L}>0$. Since $\|u\|\leq L$ and the minimum has the property that $f_d(m_d)<0,$ it follows, similarly as before, that 
 \begin{eqnarray*}\nonumber\liminf_{n\to \infty}\mathcal{E}_{a,b}(cu_n)&\leq &  \liminf_{j\to \infty}\mathcal{E}_{a,b}(cu_{n_j})\\ &=&\liminf_{j\to \infty}\left\{\frac{a}{2}\|cu_{n_j}\|^2+\frac{b}{4}\|cu_{n_j}\|^4-\frac{\mathtt{S}_d^{-\frac{2^*}{2}}}{2^*}\|cu_{n_j}\|^{2^*}\right\}\nonumber\\
&=&\liminf_{j\to \infty} \|cu_{n_j}\|^2 f_d\left(\|cu_{n_j}\|\right)\nonumber\\&=& (cL)^2 f_d(cL)=(cL)^2 f_d(m_d)\\&\leq&\|cu\|^2 f_d(m_d)\\&\leq & \|cu\|^2 f_d(\|cu\|). 
\end{eqnarray*}
Similarly as in (\ref{meg-ez-is-kell}) and using  Willem \cite[Proposition 1.43]{Willem}, we have that $\|cu\|^2f_d(\|cu\|)<\mathcal{E}_{a,b}(cu),$ i.e., $\mathcal{E}_{a,b}$ is not sequentially weakly lower semicontinuous  on $H_0^1(\Omega)$, a contradiction. 
\end{proof}

\begin{proof}[Proof of  Theorem {\rm\ref{main1}/(ii)}]
	Let $\{u_{n}\}\subset H_0^1(\Omega)$ be a  PS-sequence for $\mathcal E_{a,b}$, i.e., for some $c\in \mathbb R$, 
	\[
	\begin{cases}
	\mathcal{E}_{a,b}(u_{n})\to c\\
	\mathcal{E}_{a,b}'(u_{n})\to0
	\end{cases}\mbox{as }n\to\infty.
	\]
One can prove that $\mathcal{E}_{a,b}$ is of class $C^2$ on $H_0^1(\Omega)$; in particular,  a direct calculation yields  (see also Willem \cite[Proposition 1.12]{Willem}) that 
\begin{equation}\label{E-derivalt}
\langle\mathcal{E}_{a,b}'(u),v\rangle=\left(a+b\|u\|^2\right)\int_\Omega \nabla u\nabla v -\int_\Omega |u|^{2^*-2}uv ,\ \forall u,v\in H_0^1(\Omega).
\end{equation}
Note that $\mathcal E_{a,b}$ is coercive on $H_0^1(\Omega)$; indeed, the claim follows by (\ref{Sob}) together with the facts that if $d>4$ then $4>2^*$, while if $d=4$ then   $b>\mathtt{PS}_{4}=\mathtt{S}_{4}^{-2}$.  In particular,  it follows that  $\{u_{n}\}$ is bounded in $H_0^1(\Omega)$, thus there exists $u\in H_{0}^{1}(\Omega)$ such that (up to a subsequence),
	\begin{align*}
	u_{n} & \rightharpoonup u\mbox{ in }H_{0}^{1}(\Omega),\\
	u_{n} & \to u\mbox{ in }L^{p}(\Omega),\ p\in[1,2^{*}),\\
	u_{n} & \to u\mbox{ a.e. in }\Omega.
	\end{align*}
	By using the second concentration compactness lemma of Lions \cite{L}, there exist an at most countable index set $J$,
	a set of points $\{x_{j}\}_{j\in J}\subset\Omega$ and two families of positive
	numbers $\{\eta_{j}\}_{j\in J}$, $\{\nu_{j}\}_{j\in J}$ such that
	\begin{align}
	|\nabla u_{n}|^{2} & \rightharpoonup d\eta\geq|\nabla u|^{2}+\sum_{j\in J}\eta_{j}\delta_{x_{j}},\label{11-}\\
	|u_{n}|^{2^{*}} & \to d\nu=|u|^{2^{*}}+\sum_{j\in J}\nu_{j}\delta_{x_{j}},\label{22-}
	\end{align}
	in the sense of measures, where $\delta_{x_{j}}$ is the Dirac mass concentrated at
	$x_{j}$ and such that
\begin{equation}\label{munu}
	\mathtt{S}_{d}  \nu_{j}^{\frac{2}{2^{*}}}\leq\eta_{j},\ \forall j\in J.
\end{equation}

We are going to prove that the index set $J$ is empty. Arguing
	by contradiction, we may assume that there exists a $j_{0}$ such
	that $\nu_{j_{0}}\neq0$ at $x_0$. For a sufficiently small $\varepsilon>0$ we consider a non-negative  cut-off function $\phi_\varepsilon$ such that
	\begin{align*}
	&0\leq \phi_{\varepsilon}  \leq 1\mbox{ in }\Omega,\\
	&\phi_{\varepsilon}  \equiv1\mbox{ in }B(x_{0},\varepsilon),\\
	&\phi_{\varepsilon}  =0\mbox{ in } \Omega\setminus B(x_{0},2\varepsilon),\\
	&|\nabla\phi_{\varepsilon}|  \leq\frac{2}{\varepsilon},
	\end{align*}
where $B(x_0,r)=\{x\in \mathbb R^d:|x-x_0|<r \}$ for $r>0.$	It is clear that the sequence $\{u_{n}\phi_{\varepsilon}\}$ is
	bounded in $H_{0}^{1}(\Omega)$,  thus 
	\[
	\lim_{n\to\infty}\mathcal{E}_{a,b}'(u_{n})(u_{n}\phi_{\varepsilon})=0.
	\]
	In particular, by (\ref{E-derivalt})  it turns out that when $n\to \infty$, one has
	\begin{eqnarray*}
	o(1) &=& \mathcal{E}_{a,b}'(u_{n})(u_{n}\phi_{\varepsilon})\\&=&(a+b\|u_{n}\|^{2})\int_{\Omega}\nabla u_{n}\nabla(u_{n}\phi_{\varepsilon})-\int_{\Omega}|u_{n}|^{2^{*}}\phi_{\varepsilon}\\
	& =&(a+b\|u_{n}\|^{2})\left(\int_{\Omega}|\nabla u_{n}|^{2}\phi_{\varepsilon}+\int_{\Omega}u_{n}\nabla u_{n}\nabla\phi_{\varepsilon}\right)-\int_{\Omega}|u_{n}|^{2^{*}}\phi_{\varepsilon}.
	\end{eqnarray*}

First, by  H\"{o}lder's inequality, there exists $C>0$ (not depending on $n$) such that
	
	\begin{eqnarray*}
		\left|\int_\Omega u_{n}\nabla u_{n}\nabla\phi_{\varepsilon}\right|&=&\left|\int_{B(x_{0},2\varepsilon)}u_{n}\nabla u_{n}\nabla\phi_{\varepsilon}\right|\leq \left(\int_{B(x_{0},2\varepsilon)}|\nabla u_n|^2\right)^\frac{1}{2}
		\left(\int_{B(x_{0},2\varepsilon)}|u_n\nabla \phi_\varepsilon|^2\right)^\frac{1}{2}\\
		&\leq&  C \left(\int_{B(x_{0},2\varepsilon)}|u_n\nabla \phi_\varepsilon|^2\right)^\frac{1}{2}.
	\end{eqnarray*}
The Lebesgue dominated convergence theorem implies that $$\lim_{n\to\infty}\int_{B(x_{0},2\varepsilon)}|u_n\nabla \phi_\varepsilon|^2=\int_{B(x_{0},2\varepsilon)}|u\nabla \phi_\varepsilon|^2,$$ and by
	\begin{eqnarray*}
		\left(\int_{B(x_{0},2\varepsilon)}|u\nabla \phi_\varepsilon|^2\right)^\frac{1}{2}&\leq &
		\left(\int_{B(x_{0},2\varepsilon)} |u|^{2^*}\right)^\frac{1}{2^*}
		\left(\int_{B(x_{0},2\varepsilon)}|\nabla \phi_\varepsilon|^d \right)^\frac{1}{d}\\
		&\leq& C \left(\int_{B(x_{0},2\varepsilon)} |u|^{2^*}\right)^\frac{1}{2^*},
	\end{eqnarray*}
	for some $C>0$, we obtain
	
	\[
	\lim_{\varepsilon\to0}\lim_{n\to\infty}(a+b\|u_{n}\|^{2})\left|\int_\Omega u_{n}\nabla u_{n}\nabla\phi_{\varepsilon}\right|=0.
	\]
	Second, by (\ref{11-}) it follows that 
	\begin{eqnarray*}
		\lim_{n\to\infty}(a+b\|u_{n}\|^{2})\int_{\Omega}|\nabla u_{n}|^{2}\phi_{\varepsilon}&\geq&
		\lim_{n\to\infty}\left[a\int_{B(x_{0},2\varepsilon)}|\nabla u_{n}|^{2}\phi_{\varepsilon}+b\left(\int_{\Omega}|\nabla u_{n}|^{2}\phi_{\varepsilon}\right)^{2}\right]\\&\geq&
		a\int_{B(x_{0},2\varepsilon)}|\nabla u|^{2}\phi_{\varepsilon}+b\left(\int_{\Omega}|\nabla u|^{2}\phi_{\varepsilon}\right)^{2}+a\eta_{j_{0}}+b\eta_{j_{0}}^{2},
	\end{eqnarray*}
	thus
	\[
	\lim_{\varepsilon\to0}\lim_{n\to\infty}(a+b\|u_{n}\|^{2})\int_{\Omega}|\nabla u_{n}|^{2}\phi_{\varepsilon} \geq a\eta_{j_{0}}+b\eta_{j_{0}}^{2}.\]
	Third, by (\ref{22-}) one has that 
	\begin{align*}
	\lim_{\varepsilon\to0}\lim_{n\to\infty}\int_\Omega|u_{n}|^{2^{*}}\phi_{\varepsilon} & =\lim_{\varepsilon\to0}\int_\Omega |u|^{2^{*}}\phi_{\varepsilon}+\nu_{j_{0}}=\lim_{\varepsilon\to0}\int_{B(x_{0},2\varepsilon)} |u|^{2^{*}}\phi_{\varepsilon}+\nu_{j_{0}}=\nu_{j_{0}}.
	\end{align*}

	Summing up the above estimates, one obtains
	\begin{align}\label{33-}
	0 & \geq a\eta_{j_{0}}+b\eta_{j_{0}}^{2}-\nu_{j_0}\geq a\eta_{j_{0}}+b\eta_{j_{0}}^{2}-\mathtt{S}_{d}^{-\frac{2^{*}}{2}}\eta_{j_{0}}^{\frac{2^{*}}{2}}\nonumber \\
	& =\eta_{j_{0}}\left(a+b\eta_{j_{0}}-\mathtt{S}_{d}^{-\frac{2^{*}}{2}}\eta_{j_{0}}^{\frac{2^{*}}{2}-1}\right).
	\end{align}
	Let $\tilde f_d:[0,\infty)\to\mathbb{R}$ be the function defined by $${\displaystyle \tilde f_d(x)=a+bx-\mathtt{S}_{d}^{-\frac{2^{*}}{2}}x^{\frac{2^{*}}{2}-1}},\ x\geq 0.$$
	One can see that the assumption   $\ds a^{\frac{d-4}{2}}b>\mathtt{PS}_{d}$ implies that 
	$\tilde f_d(x)>0$ for all $x\geq0$. In particular, it follows that
	$
	a+b\eta_{j_{0}}-\mathtt{S}_{d}^{-\frac{2^{*}}{2}}\eta_{j_{0}}^{\frac{2^{*}}{2}-1}>0,
	$
	therefore by (\ref{33-}) we necessarily have  that $\eta_{j_{0}}=0,$ contradicting $\nu_{j_0}\neq 0$ and (\ref{munu}). The latter fact  implies
	that $J$ is empty. In particular, by (\ref{22-}) and  Brezis-Lieb lemma it follows that $u_{n}\to u\mbox{ in }L^{2^{*}}(\Omega)$ as $n\to \infty$; thus
	\begin{equation}\label{bl-krit}
	\lim_{n\to\infty}\ds \int_{\Omega}|u_n|^{2^*-2}u_n(u-u_n)=0.
	\end{equation}
	Since $\mathcal{E}_{a,b}'(u_{n})\to0$ as $n\to \infty$, we have  by (\ref{E-derivalt}) and (\ref{bl-krit}) that  
	\begin{eqnarray*}
	0&=&\lim_{n\to\infty}\mathcal{E}_{a,b}'(u_{n})(u_{n}-u)\\&=&\lim_{n\to\infty}\left((a+b\|u_{n}\|^{2})\int_{\Omega}\nabla u_n(\nabla u-\nabla u_n)+\int_{\Omega}|u_n|^{2^*-2}u_n(u-u_n)\right)\\&=&\lim_{n\to\infty}\left((a+b\|u_{n}\|^{2})\int_{\Omega}\nabla u_n\nabla (u- u_n)\right).
	\end{eqnarray*}
	By the boundedness of $\{u_n\}\subset H_0^1(\Omega)$, the latter relation and the fact that $u_n\rightharpoonup u$ in $H_0^1(\Omega)$, i.e., $\ds\int_{\Omega}\nabla u\nabla (u- u_n)\to 0$ as $n\to \infty$, we obtain at once that  $\|u_n-u\|^2\to 0$ as $n\to \infty$, which concludes the proof.
\end{proof}

\begin{proof}[Proof of  Theorem {\rm\ref{main1}/(iii)}]
	It is well known that the energy functional $\mathcal{E}_{a,b}:H^1_0(\Omega)\to \mathbb{R}$ is convex if and only id $\mathcal{E}_{a,b}'$ is monotone, or equivalently, $$\langle\mathcal{E}_{a,b}''(u)v,v\rangle\geq 0,\ \forall u,v\in H^1_0(\Omega).$$
By using (\ref{E-derivalt}), we have 
	$$\langle\mathcal{E}_{a,b}''(u)v,v\rangle=a\|v\|^2+b\|u\|^2\|v\|^2+2b\left(\int_\Omega\nabla u\nabla v \right)^2-(2^*-1)\int_\Omega |u|^{2^*-2}v^2 .$$
	Moreover, by H\"older and Sobolev inequalities, one can see that 
	\begin{align*}\langle\mathcal{E}_{a,b}''(u)v,v\rangle &\geq a\|v\|^2+b\|u\|^2\|v\|^2-(2^*-1)\mathtt{S}_d^{-\frac{2^*}{2}}\|u\|^{2^*-2}\|v\|^2\\ &=\|v\|^2 \left[a+b\|u\|^2-(2^*-1)\mathtt{S}_d^{-\frac{2^*}{2}}\|u\|^{2^*-2}\right].
	\end{align*}
	
	Let us consider  the function $\overline f_d:[0,\infty)\to \mathbb R$ given by $$\overline f_d(x)=a+bx^2-(2^*-1)\mathtt{S}_d^{-\frac{2^*}{2}}x^{2^*-2},\ x\geq 0.$$
	
	We claim that the function $\overline f_d$ is positive on $[0,\infty)$.
	
	\textit{Case 1}: $d=4$.  Since $2^*=4$, the hypothesis $b\geq \ds\mathtt{C}_d$ (which is equivalent to $b\mathtt{S}_4^2\geq 3$) implies that
	\begin{align*} \overline f_4(x)&=a+bx^2-3\mathtt{S}_4^{-2}x^2=a+x^2\left(b-3\mathtt{S}_4^{-2}\right)\geq 0,\ \forall x\geq 0.
	\end{align*}
	
	\textit{Case 2}: $d>4$. The global minimum of the function $\overline f_d$ is at $\overline m_d>0$, where $$\overline m_d=\left[\frac{2b\mathtt{S}_d^{\frac{2^*}{2}}}{(2^*-1)(2^*-2)}\right]^{\frac{1}{2^*-4}}.$$ It turns out that  
$$
	\displaystyle a^\frac{d-4}{2}b\geq \mathtt{C}_d \Longleftrightarrow \overline f_d(\overline m_d)\geq 0,
$$
	which proves the claim. The strict convexity of $\mathcal{E}_{a,b}$ similarly follows whenever $a^\frac{d-4}{2}b> \mathtt{C}_d $ is assumed. 
\end{proof}

\section{Applications: proof of Theorems \ref{Poisson}\&\ref{japan}}\label{section-appl}
%

\begin{proof}[Proof of Theorem \ref{Poisson}]
		We consider the energy functional associated with problem $(\mathscr{P}_{a,b}^h)$, i.e.,
		\begin{equation*}
		\mathcal{E}(u)=\mathcal{E}_{a,b}(u)-\int_{\Omega
		}h (x)u(x)dx,\ \ u\in
		H_{0}^1(\Omega).
		\end{equation*}%
		It is easy to prove that  $\mathcal{E}$ belongs to $C^1(H_0^1(\Omega),\mathbb{R}%
		)$ and its critical points are exactly the weak solutions of problem $(\mathscr{P}_{a,b}^h)$. 
	Moreover, $\mathcal{E}$ is bounded from below and coercive on $H_0^1(\Omega),$ i.e., $\mathcal{E}(u)\rightarrow
		+\infty $ whenever
		$\|u\|\rightarrow +\infty $. 
		
		(i) If $a^{\frac{d-4}{2}}b\geq \ds\mathtt{L}_d$, by Theorem \ref{main1}/(i) and the fact that $u\mapsto \ds \int_{\Omega}h(x)u(x)dx$ is sequentially weakly continuous on $H_0^1(\Omega)$ (due to the boundedness of $\Omega$ and the compactness of the  embedding $H_0^1(\Omega)$ into $L^p(\Omega)$, $p\in [1,2^*)$),  $\mathcal{E}$ turns to be sequentially weakly lower semicontinuous on $H^1_0(\Omega)$. Thus the basic result of the
		calculus of variations
		implies that $\mathcal E$ has a global minimum point $u\in H_0^1(\Omega)$, see Zeidler \cite[Proposition 38.15]{Zeidler},  which is also a 
		 critical point of $\mathcal{E}.$
		
		(ii)  If $\ds a^{\frac{d-4}{2}}b> \mathtt{C}_d$, Theorem \ref{main1}/(iii)   implies that 
		$\mathcal{E}$ is strictly convex on $H_0^1(\Omega)$. By Zeidler \cite[
		Theorem 38.C]{Zeidler}  it follows that $\mathcal E$ has at most one minimum/critical point. The inequality (\ref{sorrend}) and (i) conclude the proof.  
\end{proof}

%
%
%
%
%

For $f\in \mathcal A$, let us denote by $\ds F(x,t)=\int_0^t f(x,s)ds.$ We now prove the following result which directly implies Theorem \ref{japan}.   

\begin{theorem}\label{utolso-tetel} Let $\Omega\subset \mathbb{R}^d$ be an open bounded domain $(d\geq 4)$, let $f\in \mathcal A$, and $a,b>0$ two fixed  numbers such that  $a^{\frac{d-4}{2}}b>\mathtt{PS}_d$. Assume also that
	\begin{itemize}
		\item[$H_1)$] $\ds\lim_{t \rightarrow 0}\frac{\ds\sup_{x\in\Omega}F(x,t)}{t^2}\leq 0;$
		
		\item[$H_2)$]  
		$\ds \sup_{u\in \W}\int_\Omega F(x,u)>0$.
	\end{itemize}
	Set
	\begin{equation}\label{lambdastar}
	\lambda^* = \inf \left\{\frac{\ds\frac{a}{2}\|u\|^2+\frac{b}{4}\|u\|^4-\frac{1}{2^*}\|u\|_{2^*}^{2^*}}{\ds\int_\Omega F(x,u) } : u\in\W,  \int_\Omega F(x,u) >0\right\}.
	\end{equation}
	Then, for each compact
	interval $[\alpha, \beta]\subset (\lambda^*, +\infty )$, there exists $r>0$
	with the following property: for every $\lambda \in [\alpha, \beta],$  and for every $g\in\mathcal A$,  there
	exists $\mu^* > 0$ such that for each $\mu \in [0, \mu^*],$ the
	problem $(\mathcal{P}_{a,b}^{f,g})$ has at least three weak
	solutions   whose
	norms are less than $r$.
\end{theorem}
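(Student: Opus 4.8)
The plan is to derive Theorem~\ref{utolso-tetel} from a three critical points theorem of B.~Ricceri \cite{Ricceri}, applied in the separable reflexive Hilbert space $X:=H^1_0(\Omega)$ to the $C^1$ functionals $\Phi:=\mathcal{E}_{a,b}$, $\Psi(u):=\int_\Omega F(x,u)\,dx$ and $J(u):=\int_\Omega G(x,u)\,dx$, where $G(x,t)=\int_0^t g(x,s)\,ds$. First I would recall that the functional attached to $(\mathcal{P}_{a,b}^{f,g})$ is $\mathcal{I}_{\lambda,\mu}:=\Phi-\lambda\Psi-\mu J$, that it is of class $C^1$ on $X$, and that its critical points coincide with the weak solutions of $(\mathcal{P}_{a,b}^{f,g})$; hence it suffices to produce, for the stated ranges of $\lambda,\mu$, three critical points of $\mathcal{I}_{\lambda,\mu}$ of norm $<r$, which is exactly what Ricceri's theorem outputs once its hypotheses are verified for the triple $(\Phi,\Psi,J)$.

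I would then check the structural hypotheses on $\Phi=\mathcal{E}_{a,b}$. Since $a^{\frac{d-4}{2}}b>\mathtt{PS}_d\ge\mathtt{L}_d$ by \eqref{sorrend}, Theorem~\ref{main1}/(i) gives that $\Phi$ is sequentially weakly lower semicontinuous on $X$; it is clearly bounded on bounded subsets, and it is coercive by the argument in the proof of Theorem~\ref{main1}/(ii) (for $d=4$ one uses $b>\mathtt{PS}_4=\mathtt{S}_4^{-2}$, so that the quartic term dominates). Moreover, the chain of estimates in Step~1 of the proof of Theorem~\ref{main1}/(i), together with $f_d>0$ on $[0,\infty)$ (which holds because $a^{\frac{d-4}{2}}b>\mathtt{L}_d$ strictly, by the computations there), shows that $\Phi(u)\ge\|u\|^2 f_d(\|u\|)>0$ for every $u\ne0$, so $0$ is a strict global minimum of $\Phi$ with $\Phi(0)=\Psi(0)=0$, and also that $\Phi$ belongs to Ricceri's class $\mathcal{W}_X$ (if $u_n\rightharpoonup u$ and $\liminf_n\Phi(u_n)\le\Phi(u)$, then $\|u_n-u\|\to0$ along a subsequence). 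Because $f,g\in\mathcal{A}$ have subcritical growth, the Nemytskii operators defining $\Psi'$ and $J'$ factor through a compact embedding $H^1_0(\Omega)\hookrightarrow L^q(\Omega)$ with $q<2^*$, so $\Psi',J'$ are compact and $\Psi,J$ are sequentially weakly continuous; in particular $\Phi-\lambda\Psi-\mu J$ remains coercive for all $\lambda,\mu\ge0$ (the subcritical terms are dominated by the $\|u\|^4$-growth of $\Phi$, as $q<2^*\le4$). Finally --- the decisive point --- Theorem~\ref{main1}/(ii) asserts that $\Phi$ satisfies the Palais-Smale condition; together with the compactness of $\Psi',J'$ this makes every Palais-Smale sequence of $\Phi-\lambda\Psi-\mu J$ a Palais-Smale sequence of $\Phi$ up to lower-order terms, hence strongly convergent along a subsequence, so $\Phi-\lambda\Psi-\mu J$ satisfies the Palais-Smale condition for all $\lambda,\mu$.

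It then remains to locate the admissible interval of parameters. Set $\delta_0:=\max\{0,\ \limsup_{\|u\|\to\infty}\Psi(u)/\Phi(u),\ \limsup_{u\to0}\Psi(u)/\Phi(u)\}$ and $\gamma_0:=\sup\{\Psi(u)/\Phi(u):\Phi(u)>0\}$. I would check $\delta_0=0$: the first $\limsup$ vanishes because $\Phi(u)\ge c\|u\|^4$ for large $\|u\|$ while $|\Psi(u)|\le C(1+\|u\|^q)$ with $q<2^*\le4$, and the second is $\le0$ by $H_1)$ --- splitting $\Omega$ into $\{|u|<\delta\}$, where $F(x,u)\le\varepsilon u^2$, and its complement, where the subcritical bound on $f$ and H\"older's inequality give a $C_\delta\|u\|^{2^*}$ contribution, one obtains $\Psi(u)\le C\varepsilon\|u\|^2+C_\delta\|u\|^{2^*}$ near $0$, while $\Phi(u)\ge\frac a4\|u\|^2$ there, and then $\varepsilon\downarrow0$. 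Since $\Phi(u)>0$ for every $u\ne0$, the set $\Phi^{-1}((0,+\infty))$ equals $X\setminus\{0\}$; hence $H_2)$ forces $\gamma_0>0$ and $\gamma_0^{-1}=\inf\{\Phi(u)/\Psi(u):\Psi(u)>0\}=\lambda^*$, with $\lambda^*$ as in \eqref{lambdastar}. Therefore the admissible interval in Ricceri's theorem is precisely $(\lambda^*,+\infty)$, and applying the theorem with $J$ ranging over $\{u\mapsto\int_\Omega G(x,u):g\in\mathcal{A}\}$ yields, for each compact $[\alpha,\beta]\subset(\lambda^*,+\infty)$, a radius $r>0$ and, for every $\lambda\in[\alpha,\beta]$ and $g\in\mathcal{A}$, a threshold $\mu^*>0$ such that $\mathcal{I}_{\lambda,\mu}$ has at least three critical points of norm $<r$ whenever $\mu\in[0,\mu^*]$; these are the required weak solutions of $(\mathcal{P}_{a,b}^{f,g})$.

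I expect the main obstacle to be the loss of compactness caused by the critical nonlinearity. In the regime $\mathtt{PS}_d<a^{\frac{d-4}{2}}b\le\mathtt{C}_d$ the functional $\mathcal{E}_{a,b}$ is neither convex nor does its derivative admit a continuous inverse on $X^*$ (that would require $a^{\frac{d-4}{2}}b\ge\mathtt{C}_d$, by Theorem~\ref{main1}/(iii)), so the classical verification of Ricceri's hypotheses through the invertibility of $\Phi'$ is unavailable; it is precisely the Palais-Smale property of $\mathcal{E}_{a,b}$ from Theorem~\ref{main1}/(ii) --- proved via Lions' second concentration compactness lemma --- that supplies the compactness needed to run the minimax argument. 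Both the subcriticality of $f$ and $g$ and the strict inequality $a^{\frac{d-4}{2}}b>\mathtt{PS}_d$ enter the argument in an essential way.
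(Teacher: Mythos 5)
Your overall strategy is the right one, and you correctly identify the decisive point: in the regime $a^{\frac{d-4}{2}}b>\mathtt{PS}_d$ the Palais--Smale property from Theorem \ref{main1}/(ii) must replace the usual compactness hypothesis in Ricceri-type three critical points theorems, and the threshold $\lambda^*$ of \eqref{lambdastar} is exactly the reciprocal of the quantity $\gamma_0$ you compute. However, as written your argument has a genuine gap: you propose to apply Ricceri's three critical points theorem as a black box to the triple $(\Phi,\Psi,J)$, and then concede in your final paragraph that one of its hypotheses --- that $\Phi'=\mathcal{E}_{a,b}'$ admits a continuous inverse on $X^*$ --- is not available when $\mathtt{PS}_d<a^{\frac{d-4}{2}}b<\mathtt{C}_d$. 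The assertion that the Palais--Smale property ``supplies the compactness needed to run the minimax argument'' is precisely the step that has to be proved, not assumed: since the cited theorem cannot be invoked, you must reconstruct its proof with the PS condition in place of the invertibility of $\Phi'$, and your proposal does not do this. A second, smaller imprecision occurs in your verification of the PS condition for $\Phi-\lambda\Psi-\mu J$: a PS sequence for the perturbed functional is not a PS sequence for $\Phi$ ``up to lower-order terms'' in the sense needed (one only gets $\Phi'(u_n)\to w$ for some $w\ne 0$ in $X^*$), so one must rerun the concentration--compactness argument of Theorem \ref{main1}/(ii) for the full functional, observing that the subcritical terms do not contribute to the concentration measures.

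The paper fills the gap you leave open by assembling the three solutions by hand rather than quoting a single three critical points theorem. Concretely: (1) hypothesis $H_1)$ makes $0$ a strong local minimum of $\mathcal{E}_{a,b}-\lambda J_f$, and Ricceri's Theorem C in \cite{Ric1} upgrades this to a local minimum in the \emph{weak} topology; (2) Ricceri's perturbation result \cite[Theorem 4]{R}, applied with $P=\mathcal{E}_{a,b}-\lambda J_f$ and the weak topology (this result needs only coercivity, sequential weak lower semicontinuity and the property recorded in \eqref{later}, not invertibility of $P'$), produces two local minimizers $u_1,u_2$ of $\mathcal{E}_{a,b}-\lambda J_f-\mu\tilde J_g$ lying in a ball $B_\eta$ uniformly in $\lambda\in[\alpha,\beta]$; (3) the third critical point comes from the Pucci--Serrin mountain pass theorem \cite[Theorem 1]{PS}, which requires exactly the PS condition, and its norm is controlled by evaluating the minimax level along the segment joining $u_1$ and $u_2$ inside $B_\eta$ and using the inclusion \eqref{inclusion}. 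Note also the truncation $\tilde J_g=h\circ J_g$: it is what makes the threshold $\mu^*$ and the radius $r$ uniform over all $g\in\mathcal{A}$, since it forces the $\mu$-perturbation to be uniformly bounded in sup-norm; your proposal, which feeds $J_g$ directly into the (inapplicable) theorem, does not address how this uniformity is recovered once the black box is removed.
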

\begin{proof} 
	Denote by $J_f :\W\to\R$ the functional defined by
	\[J_f(u)=\int_\Omega F(x,u), \]
	and consider as before the functional \[\mathcal{E}_{a,b}(u)=\frac{a}{2}\|u\|^2+ \frac{b}{2}\|u\|^4-\frac{1}{2^*}\|u\|_{2^*}^{2^*} ,\]
	From Theorem \ref{main1}/(i), $\mathcal{E}_{a,b}$ is sequentially weakly lower semicontinuous (see \eqref{sorrend}), and if $\{u_n\}$ weakly converges to $u$ and $\ds \liminf_{n\to\infty}\mathcal{E}_{a,b}(u_n)\leq \mathcal{E}_{a,b}(u)$, then $\{u_n\}$ has a subsequence strongly convergent to $u$, see \eqref{later}.  Moreover it is of class $C^1$ on $\W$. Since $f$ has a subcritical growth, $J_f$ is sequentially weakly continuous in $\W$, of class $C^1$ too and bounded on bounded sets.
	
	From assumption $H_1)$ it follows that   
	\[\limsup_{u\to 0}\frac{J_f(u)}{\mathcal{E}_{a,b}(u)}\leq 0,\] therefore $\mathcal{E}_{a,b}-\lambda J_f$ has a (strong) local minimum at zero for every $\lambda>0$. By Ricceri \cite[Theorem C]{Ric1}, zero turns out to be  a local minimizer of $\mathcal{E}_{a,b}-\lambda J_f$ in the weak topology of $\W$.  It is also clear that  $\mathcal{E}_{a,b}-\lambda J_f$ is  coercive for every $\lambda$ and, if $\lambda>\lambda^*$, its global minimum  is different to zero. 
	
	To proceed, fix $[\alpha, \beta]\subset(\lambda^*,+\infty)$ and choose
	$\;\sigma>0$. By the coercivity of $\mathcal{E}_{a,b}-\lambda J_f$ it 
	follows that the set
	$\overline{(\mathcal{E}_{a,b}-\lambda J_f)^{-1}((-\infty,\sigma))}^w$
	is   compact and metrizable  with respect
	to the weak topology. Also,  
	\[\bigcup_{\lambda\in [\alpha,\beta]}(\mathcal{E}_{a,b}-\lambda J_f)^{-1}((-\infty,\sigma))\}\subseteq
	B_\eta,\]
	for some positive radius $\eta$, where $B_\eta=\{u\in H_0^1(\Omega):\|u\|<\eta\}$. Let $\ds c^*=\sup_{B_\eta}\mathcal{E}_{a,b}+ \beta\sup_{B_\eta}|J_f|$ and let
	$r>\eta$ be such that
	\begin{equation}\label{inclusion}\bigcup_{\lambda\in [\alpha,\beta]}(\mathcal{E}_{a,b}-\lambda J_f)^{-1}((-\infty, c^*+2])\}\subseteq B_r.\end{equation}
	
	Let $\lambda\in [\alpha,\beta]$ and fix $g\in \mathcal A$. Thus, if $J_g:\W\to\R$ is the functional defined by 
	
	\[J_g(u)=\int_\Omega G(x,u) \quad \mbox{where} \quad  G(x,t)=\int_0^t g(x,s)ds, \] then, $J_g$ is of class $C^1$, with compact derivative.
	Choose  a function $h\in C^1(\R)$, bounded, such that $h(t)=t$ for every $t$ such that $\ds |t|\leq \sup_{B_{r}}|J_g|$. Define $\tilde J_g =h\circ J_g$. Then, $\tilde J_g$ has compact derivative and $\tilde J_g(u)=J_g(u)$ for every $u\in B_r$.

	Applying Ricceri \cite[Theorem 4]{R} with $P=\mathcal{E}_{a,b}-\lambda J_f, \ Q=\tilde J_g$, $\tau$ the weak topology of $\W$,  we deduce the existence of some $\;\delta>0$
	such that for every $\mu\in[0,\delta]$, $\mathcal{E}_{a,b}-\lambda J_f-\mu \tilde J_g$ has two
	local minimizers in the $\tau_{\mathcal{E}_{a,b}-\lambda J_f}$ topology (the smallest topology containing both the weak topology and the sets $\{(\mathcal{E}_{a,b}-\lambda J_f)^{-1}((-\infty,s))\}_{s\in \mathbb R}$), say $u_1, u_2$, such
	that
	
	\begin{equation} \label{bound}
	u_1, u_2 \in (\mathcal{E}_{a,b}-\lambda J_f)^{-1}((-\infty,\sigma) )\; \subseteq B_{\eta}
	\;\subseteq\; B_r\;.
	\end{equation}
	
	Since
	the topology $\tau_{\mathcal{E}_{a,b}-\lambda J_f}$ is
	weaker than the strong topology, $u_1$ and $ u_2$ turn out to be
	local minimizers of the
	functional $\mathcal{E}_{a,b}-\lambda J_f-\mu \tilde J_g$. Define now $\ds\mu^*=\min\left\{\delta, \frac{1}{\sup_{\R}h}\right\}.$ One can see that $\mathcal{E}_{a,b}-\lambda J_f-\mu \tilde J_g$ satisfies the Palais-Smale condition as in the Theorem \ref{main1}/(ii) (Palais-Smale for $\mathcal{E}_{a,b}$), thus from Pucci and Serrin  \cite[Theorem 1]{PS} there exists a critical
	point of $\mathcal{E}_{a,b}-\lambda J_f-\mu \tilde J_g$, say $u_3,$ such that
	\[(\mathcal{E}_{a,b}-\lambda J_f-\mu \tilde J_g)(u_3)=\inf_{\gamma\in \mathcal S}\sup_{t\in[0,1]}(\mathcal{E}_{a,b}-\lambda J_f-\mu \tilde J_g)(\gamma(t)),\]
	where \[\mathcal S=\{\gamma\in C^0([0,1],\W): \ \gamma(0)=u_1,\
	\gamma(1)=u_2\}.\]

	In particular, if $\;\tilde \gamma(t)=t
	u_1+(1-t)u_2, \; t\in [0,1]$, then $\tilde\gamma\in \mathcal S$ and
	\[
	\tilde \gamma(t) \in B_{\eta}, \quad \mbox{for all}\;\;t\in [0,1].
	\]
	Recall that $\;u_1, \;u_2 \in B_{\eta},$ see (\ref{bound}).
	So, by the definition of $c^*$ and $ \mu^*$, one has
	\begin{eqnarray*}(\mathcal{E}_{a,b}-\lambda J_f-\mu \tilde J_g)(u_3) &\leq &\sup_{t\in[0,1]} (\mathcal{E}_{a,b}-\lambda J_f-\mu \tilde J_g)(\tilde \gamma(t)) \\&\leq&
		c^*+\mu^*\sup_\R h\leq c^*+1.
	\end{eqnarray*}
	Therefore,
	\[(\mathcal{E}_{a,b}-\lambda J_f)(u_3)\leq c^*+1+\mu^*\sup_\R h\leq c^*+2,\]
	and from (\ref{inclusion}) one has \[u_3\in B_r.\]
	Accordingly, we conclude that $\tilde J_g'(u_i)=J_g'(u_i)$, $i=1,2,3,$ so that $u_1,u_2, u_3$ are critical points of $\mathcal{E}_{a,b}-\lambda J_f-\mu  J_g$, i.e., weak solutions to problem $(\mathcal{P}_{a,b}^{f,g})$.	
\end{proof}

\medskip

\begin{remark}\rm 
We conclude the paper by giving an upper  estimate of $\lambda^*$ (see (\ref{lambdastar})) when $$f(x,t)=\alpha(x)h(t),$$
where $\alpha\in L^\infty(\Omega)$ and $h:\mathbb R\to \mathbb R$ is a continuous function  with $H(t_0)>0$ for some $t_0>0$, $\ds \lim_{t\to 0 }\frac{H(t)}{t^2}=0 $ and   $\ds{\rm essinf}_{x\in \Omega}\alpha=:\alpha_0>0$; hereafter, $\ds H(t)=\int_0^t h(s)ds.$ Assumption $H_1)$ is trivially verified. In order to verify $H_2)$, we consider the function 
$$u_\sigma(x)=
\left\{ 
\begin{array}{lll}
\displaystyle 0 & \mbox{if} & x\in \Omega\setminus B(x_0,R);\\
t_0\frac{(R-|x-x_0|)}{R(1-\sigma)} & \mbox{if} & x\in B(x_0,R)\setminus B(x_0,\sigma R);\\
t_0 & \mbox{if} & x\in B(x_0,\sigma R),
\end{array}\right.$$ 
where $x_0\in \Omega$, $\sigma\in (0,1)$, and $R>0$ is chosen in such a way that $R<{\rm dist}(x_0,\partial \Omega).$
It is clear that 
$$\|u_\sigma\|^2=t_0^2(1-\sigma)^{-2}(1-\sigma^d)R^{d-2}\omega_d;$$
$$\ds\int_\Omega u_\sigma^{2^*}\geq t_0^{2^*}\sigma^dR^{d}\omega_d;$$
$$\ds\int_\Omega H(u_\sigma)\geq \left[H(t_0)\sigma^d-\max_{|t|\leq t_0}H(t)(1-\sigma^d)\right]R^{d}\omega_d.$$
If $\sigma\in (0,1)$ is  close enough to $1,$ 
the right-hand side of the last estimate becomes strictly
positive; let $\sigma_0\in (0,1)$ such a value. In particular, one has that
 $$\ds \int_\Omega F(x,u_{\sigma_0})\geq \alpha_0\left[H(t_0)\sigma_0^d-\max_{|t|\leq t_0}H(t)(1-\sigma_0^d)\right]R^{d}\omega_d>0,$$
 which proves the validity of $H_2)$. Moreover, by the above estimates, it turns out that
 $$\lambda^*\leq \frac{at_0^2(1-\sigma_0)^{-2}(1-\sigma_0^d)/2+b(t_0^2(1-\sigma_0)^{-2}(1-\sigma_0^d))^2R^{d-2}\omega_d/4-t_0^{2^*}\sigma_0^dR^{2}/2^*}{\alpha_0\left[H(t_0)\sigma_0^d-\ds\max_{|t|\leq t_0}H(t)(1-\sigma_0^d)\right]R^{2}}=:\tilde \lambda.$$
 Therefore, instead of $\lambda^*$ in Theorem \ref{utolso-tetel}, we  can use the more explicit value of $\tilde \lambda>0;$ the same holds for Theorem \ref{japan} with the choice $\alpha\equiv \alpha_0=1$, $h(t)=|t|^{q-2}t$, $t_0=1$ and $\sigma_0=(3/4)^{1/d}.$
\end{remark}
 
\begin{remark}\rm 
	We conclude the paper by stating that the regularity results from Theorem \ref{main1} and the applications in Theorems  \ref{Poisson}\&\ref{japan} can be extended to compact Riemannian manifolds with suitable modifications. The most sensitive part of the proof is the equivalence from Theorem \ref{main1}/(i), which explores the \textit{non-existence} of extremal functions in the critical Sobolev embedding; such a situation is precisely described in 
	the paper of Hebey and Vaugon \cite{HV}.  
The non-compact case requires a careful analysis via appropriate group-theoretical arguments as in  Farkas and Krist\'aly \cite{FK}. We leave the details for interested readers. 
\end{remark}


\begin{thebibliography}{}

	\bibitem{ACF}
	{\sc C.O. Alves, F.J. Corr\^{e}a, G.M. Figueiredo}, {\it On a class of nonlocal elliptic problems with critical growth}. Differ. Equ. Appl. {\bf 2} (2010) 409--417.
	\bibitem{ACM}
	{\sc C.O. Alves, F.J. Corr\^{e}a, T.F. Ma}, {\it
		Positive solutions for a quasilinear elliptic equation of Kirchhoff type.}
	Comput. Math. Appl. {\bf 49} (2005) 85--93.
	
	
\bibitem{AFP} {\sc G. Autuori, A. Fiscella, P. Pucci,} \textit{Stationary Kirchhoff problems involving a fractional elliptic operator and a critical nonlinearity.} Nonlinear Anal. \textbf{125} (2015), 699--714.	

	
	\bibitem{CKW}
	{\sc C.Y. Chen, Y.C. Kuo, T.F. Wu}, {\it The Nehari manifold for a Kirchhoff type problem involving sign-changing weight functions}.
	J. Differential Equations {\bf 250} (2011) 1876--1908.
		\bibitem{CL} 
	{\sc M. Chipot, B. Lovat}, \textit{Some remarks on nonlocal elliptic and parabolic problems}. Nonlinear Anal. \textbf{30} (7) (1997) 4619--4627.
	\bibitem{CF}
	{\sc F.J. Corr\^{e}a, G.M. Figueiredo}, {\it On an elliptic equation of $p$-Kirchhoff type via variational methods}. Bull. Austral. Math. Soc. {\bf 74} (2006) 263--277.
	
	
	\bibitem{Dacorogna}
	{\sc B. Dacorogna,} {\it Direct methods in the calculus of variations.}  Second edition. Applied Mathematical Sciences, 78. Springer, New York, 2008. xii+619 pp. 
	
\bibitem{DS}	{\sc Y. Deng, W. Shuai,} \textit{Sign-changing multi-bump solutions for Kirchhoff-type equations in $\mathbb R^3.$} Discrete Contin. Dyn. Syst. \textbf{38} (2018), no. 6, 3139--3168.
	
	
	\bibitem{Fan}
	{\sc H. Fan}, {\it  Multiple positive solutions for a class of Kirchhoff type problems involving critical Sobolev exponents}. J. Math. Anal. Appl. \textbf{431} (2015) 150--168.
	
	\bibitem{FK} {\sc C. Farkas, A. Krist\'aly}, \textit{Schr\"odinger-Maxwell systems on non-compact Riemannian manifolds.} Nonlinear Anal. Real World Appl. \textbf{31} (2016), 473--491.
	
%
%
	\bibitem{F}
	{\sc G.M. Figueiredo,} {\it Existence of a positive solution for a Kirchhoff problem type with critical growth via truncation argument}. J. Math. Anal. Appl. {\bf 401} (2013) 706--713.
	\bibitem{FS}
	{\sc G.M. Figueiredo, J.R. Santos,} {\it Multiplicity of solutions for a Kirchhoff equation with subcritical or critical growth}. Differential Integral Equations 25 (2012) 853--868.
	
	
%
	
	\bibitem{H1}
	{\sc E. Hebey}, {\it  Compactness and the Palais-Smale property for critical Kirchhoff equations in closed manifolds}. Pacific J. Math. {\bf 280} (2016) 41--50.
	\bibitem{H2}
	{\sc E. Hebey}, {\it  Multiplicity of solutions for critical Kirchhoff type equations}.  Comm. Partial Differential Equations {\bf 41} (2016) 913--924.
	
	
	\bibitem{HV} {\sc E. Hebey, M. Vaugon,} 
	\textit{From best constants to critical functions.} 
	Math. Z. \textbf{237} (2001), no. 4, 737--767.
	
	\bibitem{Kirchhoff}  {\sc G. Kirchhoff}, \textit{Mechanik, Teubner}, Leipzig, 1883.
	
%
	\bibitem{LLG}
	{\sc C.Y. Lei, G.S. Liu, L.T. Guo}, {\it Multiple positive solutions for a Kirchhoff type problem with a critical
		nonlinearity}. Nonlinear Anal. Real World Appl. {\bf 31} (2016) 343--355.
	\bibitem{L}
	{\sc P.L. Lions}, {\it The concentration-compactness principle in the calculus of variations. The limit case. I}. Rev. Mat. Iberoamericana {\bf 1} (1985) 145--201.
	
	\bibitem{N0}
	{\sc D. Naimen}, {\it The critical problem of Kirchhoff type elliptic equations in dimension four}. J. Differential Equations {\bf 257} (2014) 1168--1193.
	\bibitem{PZ}
	{\sc  K. Perera, Z. Zhang}, {\it Nontrivial solutions of Kirchhoff-type problems via the Yang index}. J. Differential Equations {\bf 221} (2006) 246--255.
	
	
	\bibitem{ZP} {\sc K. Perera, Z. Zhang,} {\it Sign changing solutions of Kirchhoff type problems via invariant sets of descent flow}. J. Math. Anal. Appl. {\bf 317} (2006) 456--463.
	
	\bibitem{PS}
	{\sc P. Pucci, J. Serrin}, {\it A mountain pass theorem}. J.
	Differential Equations {\bf 60} (1985) 142--149.
	
	\bibitem{R}
	{\sc B. Ricceri}, {\it Sublevel sets and global minima of coercive functionals and local minima of their perturbation}. J. Nonlinear Convex Anal. {\bf 5} (2004) 157--168.
	
	\bibitem{Ric1}
	{\sc B. Ricceri}, {\it A further three critical points theorem}. Nonlinear Anal. {\bf 71} (2009) 4151--4157.
	
		\bibitem{Talenti}{\sc G. Talenti}, {\it Best constant in Sobolev inequality}. {Ann. Mat. Pura Appl. (4)}, {\bf 110}, {1976}, {353--372}.
	
%
	\bibitem{YM}
	{\sc X. Yao, C. Mu,} {\it Multiplicity of solutions for Kirchhoff type equations involving critical Sobolev exponents in high dimension}.
	Math. Methods Appl. Sci. {\bf 39} (2016) 3722--3734.
	
	\bibitem{Zeidler} {\sc E. Zeidler}, \textit{Nonlinear functional analysis and its
	applications.} III. Variational methods and optimization. Springer-Verlag,
	New York, 1985.
	
	
	\bibitem{Willem}{\sc M. Willem}, \emph{Minimax theorems}. Progress in Nonlinear
	Differential Equations and their Applications \textbf{24}, Birkhauser,
	Boston, 1996.




\end{thebibliography}
\end{document}